\title{An analytic calculus for intuitionistic belief} %TODO Please add
\author{Cosimo Perini Brogi}{University of Genoa, Italy \and \url{https://logicosimo.gitlab.io}}{perinibrogi@dima.unige.it}{https://orcid.org/0000-0001-7883-5727}{}%TODO mandatory, please use full name; only 1 author per \author macro; first two parameters are mandatory, other parameters can be empty. Please provide at least the name of the affiliation and the country. The full address is optional
\authorrunning{C. Perini Brogi} %TODO mandatory. First: Use abbreviated first/middle names. Second (only in severe cases): Use first author plus 'et al.'
\keywords{Proof Theory,
Intuitionistic Modal Logic,
Subformula property} %TODO mandatory; please add comma-separated list of keywords
\begin{document}

\maketitle

%TODO mandatory: add short abstract of the document
\begin{abstract}
Intuitionistic belief has been axiomatized by Artemov and Protopopescu as an extension of intuitionistic propositional logic by means of the distributivity scheme K, and of co-reflection $A\rightarrow\Box A$. This way, belief is interpreted as a result of verification, and it fits an extended Brouwer-Heyting-Kolmogorov interpretation for intuitionistic propositional logic with an epistemic modality.
In the present paper, structural properties of a natural deduction system $\mathsf{IEL}^{-}$ for intuitionistic belief are investigated. The focus is on the analyticity of the calculus, so that the normalization theorem and the subformula property are proven firstly. From these, decidability and consistency of the logic follow as corollaries. Finally, disjunction properties, $\Box$-primality, and admissibility of reflection rule are established by using purely proof-theoretic methods.\footnote{This is a very rough draft that is intended as the second part of work-in-progress started with \cite{brogi}. For sure, many expository refinements are required to the present paper: it is basically a collection of rough results and reflections.}
\end{abstract}

\section*{Introduction}
Brouwer-Heyting-Kolmogorov (BHK) interpretation provides a semantics of mathematical statements in which the computational aspects of proving and refuting are highlighted.\footnote{See e.g.~\cite{troeslavan}.}
In this perspective, reasoning intuitionistically is similar to a safe mode of program execution which always terminates, and this analogy can be made precise by means of the so-called Curry-Howard correspondence between proof and programs of type theory \cite{sorensen}.

On the other hand, Kripke semantics for intuitionistic logic \cite{kripke} captures the informal idea of a process of growth of knowledge in time which characterises the mental life of the mathematician according to the founders of intuitionism.

It is worth-noting that the focuses of these semantics are quite different: BHK interpretation stresses the importance of the concept of proof in the semantics for intuitionistic logic; Kripke's approach highlights the epistemic process behind the provability of a statement.

Artemov and Protopopescu's \cite{artemov} suggests that the two views can safely coexist once a good computational interpretation of epistemic states is given. Their starting point is thus a BHK interpretation of epistemic statements in which knowledge and belief are considered as (different) results of a process of verification. The general idea is that a proof of a (mathematical) statement is a most strict type of verification, and that verifying a statement is a sufficient condition for \emph{believing} it. At the same time, \emph{knowing} that a statement is true means, according to this intuitionistic reading, that this very statement cannot be false, since we have a verification of it.

Hence, the proposed intuitionistic account of epistemic states validates a principle of ``constructivity of truth'' $$A\rightarrow \Box A$$ and of ``intuitionistic factivity of knowledge'' $$\Box A\rightarrow\neg\neg A.$$

The paper \cite{artemov} covers only axiomatic calculi and Kripke semantics for intuitionistic epistemic logics. However, besides the epistemic reading, a normal modality satisfying co-reflection -- i.e.~based on the same axiomatic calculus as discussed in~\cite{artemov} -- has shown relevant in programming by identifying the so-called applicative functors \cite{applicative}, which can be considered as intermediate objects between the Haskell type classes \texttt{Functor} and \texttt{Monad}. As a consequence, in \cite{litak} the logic of intuitionistic belief is thought of as a logic for applicative functors as well.

The present author has given a preliminary formal analysis of the computational reading of intuitionistic belief in \cite{brogi}, where a `minimalist' \textbf{natural deduction} system $\mathsf{IEL}^{-}$ for the intuitionistic logic of belief is developed and designed with the intent of translating it into a functional calculus for $\mathsf{IEL}^{-}$-deductions.

In the same work, it is given a very quick proof of the strong normalization theorem for that very natural deduction calculus, and it is shown that the belief modality can be interpreted also as a specific pointed monoidal endofunctor on the syntactic category of $\mathsf{IEL}^{-}$-proofs.

In the present paper, we \emph{prove something more}. We address the question of \textbf{analyticity} for $\mathsf{IEL}^{-}$, and in order to achieve a precise result, we will develop a \textbf{different proof} of \textbf{strong normalization}, from which we are able to derive the \textbf{subformula property} of normal $\mathsf{IEL}^{-}$-deductions.

By using this full normalization result, we give then \textbf{syntactic proofs} of several \textbf{proof-theoretic properties} for the system and investigate the structural behaviour of our natural deduction for intuitionistic belief.

In more details, we succeed in:
\begin{itemize}
\item Developing a proof of strong normalization for $\mathsf{IEL}^{-}$-deduction w.r.t.~both detour elimination and $\vee,\bot$-permutations;
\item Proving that $\mathsf{IEL}^{-}$ is analytical -- for enjoying the subformula property -- by the latter full normalization result;
\item Developing syntactic proofs of several proof-theoretic properties of $\mathsf{IEL}^{-}$, namely
\begin{itemize}
\item decidability;
\item canonicity of proofs;
\item consistency;
\item disjunction property;
\item admissibility of the reflection rule \AxiomC{$\Box A$}\UnaryInfC{$A$}\DisplayProof ;
\item modal disjunction property;
\item $\Box$-primality. 
\end{itemize} 
\end{itemize}  

The paper is then organised as follows: after recalling the natural deduction system $\mathsf{IEL}^{-}$ and its type theoretic counterpart for proof-terms in Section \ref{sec1}, we proceed with Section \ref{sec2} in proving that, in $\mathsf{IEL}^{-}$-deductions, detours can be eliminated, and that the permutations involving absurdity and disjunction can be safely converted, so that strong normalization holds for proof terms. In Section \ref{sec3}, we use that normalization result to derive the subformula property for normal deductions, and prove the structural properties of $\mathsf{IEL}^{-}$ previously mentioned; also, we recall the proof-theoretic semantics based on category theory for $\mathsf{IEL}^{-}$-deductions, to see that having permutations does not harm the semantic soundness w.r.t.~those structures. Finally, some remarks on future investigations and related works are made.

\section{System $\mathsf{IEL}^{-}$}\label{sec1}

In \cite{artemov}, intuitionistic belief is axiomatized as a calculus $\mathbb{IEL}^{-}$ given by:
\begin{itemize}
\item Axiom schemes for intuitionistic propositional logic;
\item Axiom scheme $\mathsf{K}: \Box(A\rightarrow B)\rightarrow\Box A\rightarrow\Box B$;
\item Axiom scheme of co-reflection: $A\rightarrow\Box A$;
\item Modus Pones \AxiomC{$A\rightarrow B$}\AxiomC{$A$}\RightLabel{$\mathsmaller{\mathit{MP}}$}\BinaryInfC{$B$}\DisplayProof as the only inference rule.
\end{itemize}
We write $\Gamma\vdash_{\mathbb{IEL}^{-}} A$ when $A$ is derivable in $\mathbb{IEL}^{-}$ assuming the set of hypotheses $\Gamma$, and we write $\mathbb{IEL}^{-}\vdash A$ when $\Gamma=\varnothing$.

These principles are chosen to axiomatise the idea that belief is the result of a verification in an intuitionistic framework where truth is given by provability, as expressed by the Brouwer-Heyting-Kolmogorov interpretation of intuitionistic logic.
Accordingly, co-reflection states that whatever is proven, it is also believed, since a proof is a most strict type of verification.
Within this extended BHK interpretation, the epistemic clause reads as follows~\cite{artemov}:
\begin{center}
a proof of $\Box A$ is conclusive evidence of verification that $A$ has a proof.
\end{center}

Notice that under this reading, the classical reflection scheme $\Box A\rightarrow A$ does not hold: $A$ can be verified without disclosing a specific proof. 

Besides its potential applications even outside mathematics, in~\cite{artemov} $\mathbb{IEL}^{-}$ is proven to be a normal modal logic which is sound and complete w.r.t.~a specific relational semantics. Moreover, its proper extension $\mathbb{IEL}$ is introduced to capture the state of knowledge by adding the scheme of intuitionistic factivity of truth $\Box A\rightarrow\neg\neg A$; as for belief, an adequate relational semantic is given, and some properties of the logics are discussed.

\subsection{System $\mathsf{IEL}^{-}$}

Since in that first work on intuitionistic belief the starting point for discussing epistemic states is a BHK interpretation of them -- therefore, an implicit computational semantics for belief and knowledge -- it seems natural to develop natural deduction systems that have straightforward type-theoretic counterparts: this is n important step towards making rigorous the computational interpretation of the modal operators introduced by the axioms systems and the relational structures of \cite{artemov}.

In the present paper, we focus on the the belief modality, and we introduce first the following natural deduction system:
\begin{definition}
Let $\mathsf{IEL}^{-}$ be the calculus extending the propositional fragment of $\mathsf{NJ}$ -- the natural deduction calculus for intuitionistic logic as presented in \cite{vandalen} -- by the following rule:%\footnote{See \cite{vandalen} for an introduction.}
\begin{prooftree} \AxiomC{$\Gamma_{1}$}\noLine\UnaryInfC{$\vdots$}\noLine\UnaryInfC{$\Box A_{1}$}\AxiomC{$\,$}\noLine\UnaryInfC{$\,$}\noLine\UnaryInfC{$\cdots$}\AxiomC{$\Gamma_{n}$}\noLine\UnaryInfC{$\vdots$}\noLine\UnaryInfC{$\Box A_{n}$}\AxiomC{$[A_{1},\cdots , A_{n}],\Delta$}\noLine\UnaryInfC{$\vdots$}\noLine\UnaryInfC{$B$}\singleLine\RightLabel{$\mathsmaller{\Box-intro}$}\QuaternaryInfC{$\Box B$}
\end{prooftree}
where $\Gamma$ and $\Delta$ are \emph{sets of occurrences of formulae}, and all $A_{1},\cdots,A_{n}$ are discharged.
\end{definition}

This is the calculus introduced in~\cite{brogi}, and it differs from the system defined in \cite{depaivaeike} for $\mathbb{IK}$ by allowing the set $\Delta$ of additional hypotheses is the subdeduction of $B$.
%\footnote{This calculus differs from the system introduced in \cite{depaivaeike} by allowing the set $\Delta$ of additional hypotheses is the subdeduction of $B$. A different calculus, considering the original $\Box$-intro rule in \cite{depaivaeike} and a further rule \AxiomC{$\Gamma\vdash A$}\UnaryInfC{$\Gamma\vdash\Box A$}\DisplayProof , even though more symmetric, seems to lack some important computational properties, like uniqueness of normal proofs.}

As for the axiomatic system, we write $\Gamma\vdash_{\mathsf{IEL}^{-}} A$ when $A$ is derivable in $\mathsf{IEL}^{-}$ from the set of hypotheses $\Gamma$, and $\mathsf{IEL}^{-}\vdash A$ when $\Gamma=\varnothing$. 

It is straightforward to check that $\mathsf{IEL}^{-}$ is logically equivalent to $\mathbb{IEL}^{-}$:
\begin{proposition}$\Gamma\vdash_{\mathsf{IEL}^{-}}A$ iff $\Gamma\vdash_{\mathbb{IEL}^{-}}A$.
\end{proposition}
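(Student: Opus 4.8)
The plan is to prove the equivalence $\Gamma\vdash_{\mathsf{IEL}^{-}}A$ iff $\Gamma\vdash_{\mathbb{IEL}^{-}}A$ by establishing both directions separately, each by induction on the structure of derivations. Since the propositional fragments of $\mathsf{NJ}$ and $\mathbb{IEL}^{-}$ are well-known to be equivalent, the real work concerns the interplay between the single $\Box$-introduction rule of $\mathsf{IEL}^{-}$ and the two modal axiom schemes $\mathsf{K}$ and co-reflection together with $\mathit{MP}$ in $\mathbb{IEL}^{-}$.

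For the direction from $\mathsf{IEL}^{-}$ to $\mathbb{IEL}^{-}$, I would proceed by induction on the $\mathsf{IEL}^{-}$-derivation, and the only non-propositional case is an application of $\Box\text{-}intro$. Here I must simulate, inside $\mathbb{IEL}^{-}$, the inference that from proofs of $\Box A_{1},\dots,\Box A_{n}$ (from premises $\Gamma_1,\dots,\Gamma_n$) together with a proof of $B$ from $A_1,\dots,A_n,\Delta$, one obtains $\Box B$. By the induction hypothesis the subdeductions are available axiomatically. The key observation is that the subdeduction of $B$ from hypotheses $A_1,\dots,A_n,\Delta$ yields, by the deduction theorem for intuitionistic logic (applied to the discharged $A_i$), a proof of $A_1\rightarrow\cdots\rightarrow A_n\rightarrow B$ from $\Delta$. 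Applying co-reflection gives $\Box(A_1\rightarrow\cdots\rightarrow A_n\rightarrow B)$, and then repeated use of the $\mathsf{K}$ axiom together with $\mathit{MP}$ on the premises $\Box A_1,\dots,\Box A_n$ distributes the box and yields $\Box B$, as required.

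For the converse direction from $\mathbb{IEL}^{-}$ to $\mathsf{IEL}^{-}$, I would show that each axiom scheme of $\mathbb{IEL}^{-}$ is derivable in $\mathsf{IEL}^{-}$ and that $\mathit{MP}$ is admissible there (the latter being immediate, since implication-elimination is a rule of $\mathsf{NJ}$). Co-reflection $A\rightarrow\Box A$ is derived by taking $n=1$, $A_1=A$, with the empty side-deduction of $B:=A$ from the single discharged hypothesis $A$, so that one application of $\Box\text{-}intro$ with premise $\Box A$ assumed open yields $\Box A$, whence $\rightarrow$-introduction gives $A\rightarrow\Box A$. For $\mathsf{K}$, I would assume $\Box(A\rightarrow B)$ and $\Box A$ as the two boxed premises ($n=2$), use the discharged hypotheses $A\rightarrow B$ and $A$ in the side-deduction to derive $B$ by $\mathit{MP}$, apply $\Box\text{-}intro$ to conclude $\Box B$, and then discharge the two open assumptions with two $\rightarrow$-introductions.

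The main obstacle I expect is the careful bookkeeping of discharged versus open assumptions in the $\Box\text{-}intro$ rule, since the rule permits the extra context $\Delta$ to remain open while the $A_i$ are discharged; getting the deduction theorem to apply to exactly the right formulae, and ensuring the $\Delta$-hypotheses thread correctly through the simulation in $\mathbb{IEL}^{-}$, is where a naive argument could slip. Once the treatment of $\Delta$ and the iterated $\mathsf{K}$-distribution are handled precisely, the remaining cases are entirely routine and the equivalence follows.
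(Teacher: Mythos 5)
Your overall strategy coincides with the paper's (both directions by induction on the derivation; the paper itself defers the details to the cited earlier work), and your treatment of the direction from $\mathsf{IEL}^{-}$ to $\mathbb{IEL}^{-}$ is exactly the right simulation: deduction theorem applied to the discharged $A_{i}$, co-reflection on the resulting implication, then iterated $\mathsf{K}$ plus $\mathit{MP}$ against the boxed premises. Your derivation of $\mathsf{K}$ inside $\mathsf{IEL}^{-}$ is also correct.

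However, your derivation of co-reflection is genuinely wrong, and this is the case that carries all the content of the converse direction. With $n=1$, $A_{1}=A$, the side-deduction being the hypothesis $A$ itself, and the premise $\Box A$ taken as an open assumption, the rule $\Box$-intro \emph{discharges} $A$ and leaves $\Box A$ open; the conclusion $\Box A$ therefore depends on the open assumption $\Box A$, and the only implication available by $\rightarrow$-introduction is $\Box A\rightarrow\Box A$ --- you cannot discharge $A$, since it is no longer open. The correct derivation uses precisely the feature that, as the paper remarks, distinguishes $\mathsf{IEL}^{-}$ from the natural deduction for $\mathbb{IK}$: the \emph{undischarged} context $\Delta$. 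Take $n=0$ boxed premises, place the hypothesis $A$ in $\Delta$, and read the assumption $A$ as a one-line deduction of $B:=A$ from $\Delta=\{A\}$; then $\Box$-intro yields $\Box A$ with $A$ still open, and $\rightarrow$-introduction discharges it to give $A\rightarrow\Box A$. (This requires reading the rule as admitting zero boxed premises, which it must: otherwise no boxed formula could ever be derived from box-free assumptions, and the equivalence with $\mathbb{IEL}^{-}$ would fail outright.) Once this case is repaired, the rest of your proposal is sound.
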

\begin{proof}
For both the directions we reason by induction on the derivation. See~\cite{brogi} for the details.
\end{proof}

\subsection{Modal $\lambda$-calculus}

Despite lacking a certain symmetry pertaining to traditional natural deduction calculi, deductions in the system $\mathsf{IEL}^{-}$ can be easily turned into derivation rules for $\lambda$-terms for a modal type theory involving the belief operator. In other terms, it is possible to extend the correspondence between intuitionistic natural deduction and typed $\lambda$-calculus -- namely, the Curry-Howard correspondence -- to cover $\mathsf{IEL}^{-}$ in a very natural way.

The correspondence for propositional operators can be summarised as follows
\begin{scriptsize}

\vspace{.28cm}
\begin{longtable}{l l l p{3.2cm}}
$f\equiv A$ & $\longmapsto$ & $x_{i}^{A}$, & where $i$ is the parcel of the hypothesis $A$ \\
 & & & \\
$f\equiv$\alwaysNoLine\AxiomC{$f_{1}$}\UnaryInfC{$A$}\AxiomC{$f_{2}$}\UnaryInfC{$B$}\alwaysSingleLine\BinaryInfC{$A\wedge B$}\DisplayProof &  $\longmapsto$ & $\langle t^{A},s^{B}\rangle$, & where $t^{A}$, $s^{B}$ correspond to $f_{1}$ and $f_{2}$ resp. \\
 & & & \\
$f\equiv$\alwaysNoLine\AxiomC{$f'$}\UnaryInfC{$A\wedge B$}\alwaysSingleLine\UnaryInfC{$A$}\DisplayProof &  $\longmapsto$ & $\pi_{1}.t^{A\times B}$, & where $t^{A\times B}$ corresponds to $f'$ \\
 & & & \\
$f\equiv$\alwaysNoLine\AxiomC{$f'$}\UnaryInfC{$A\wedge B$}\alwaysSingleLine\UnaryInfC{$B$}\DisplayProof &  $\longmapsto$ & $\pi_{2}.t^{A\times B}$, & where $t^{A\times B}$ corresponds to $f'$ \\
 & & & \\
$f\equiv$\alwaysNoLine\AxiomC{$f'$}\UnaryInfC{$B$}\alwaysSingleLine\UnaryInfC{$A\rightarrow B$}\DisplayProof &  $\longmapsto$ & $\lambda x_{i}^{A}. t^{B}$, & where $t^{B}$ corresponds to $f'$ and $i$ is the parcel of discharged hypotheses $A$ \\
 & & & \\
$f\equiv$\alwaysNoLine\AxiomC{$f_{1}$}\UnaryInfC{$A\rightarrow B$}\AxiomC{$f_{2}$}\UnaryInfC{$A$}\alwaysSingleLine\BinaryInfC{$B$}\DisplayProof &  $\longmapsto$ & $t^{A\rightarrow B}s^{A}$, & where $t^{A\rightarrow B}$, $s^{A}$ correspond to $f_{1}$ and $f_{2}$ resp. \\
& & & \\
$f\equiv$\alwaysNoLine\AxiomC{$f'$}\UnaryInfC{$A$}\alwaysSingleLine\UnaryInfC{$A\vee B$}\DisplayProof &  $\longmapsto$ & $\;\mathsf{in}_{1}.t^{A}$, & where $t^{A}$ corresponds to $f'$ \\
& & & \\
$f\equiv$\alwaysNoLine\AxiomC{$f'$}\UnaryInfC{$B$}\alwaysSingleLine\UnaryInfC{$A\vee B$}\DisplayProof &  $\longmapsto$ & $\;\mathsf{in}_{2}.s^{B}$, & where $s^{B}$ corresponds to $f'$ \\
& & & \\
$f\equiv$\AxiomC{$f'$}\noLine\UnaryInfC{$A\vee B$}\AxiomC{$[A]$}\noLine\UnaryInfC{$\vdots$}\noLine\UnaryInfC{$C$}\AxiomC{$[B]$}\noLine\UnaryInfC{$\vdots$}\noLine\UnaryInfC{$C$}\TrinaryInfC{$C$}\DisplayProof & $\longmapsto$ & $\mathsf{C}_{x^{A},y^{B}}(t,t_{1},t_{2})$ & where $\mathsf{C}$ bounds all occurrences of $x$ in $t_{1}$ and all occurrences of $y$ in $t_{2}$, and $t, t_{1}, t_{2}$ correspond to $f'$, the subdeduction of $C$ from $A$, and the subdeduction of $C$ from $B$, resp. \\
& & & \\
$f\equiv$\AxiomC{$f'$}\noLine\UnaryInfC{$\bot$}\UnaryInfC{$A$}\DisplayProof & $\longmapsto$ & $\mathsf{E}_{A}t$ & where $t$ corresponds to $f'$ \\
& & & \\
$f\equiv$\AxiomC{$f'$}\noLine\UnaryInfC{$A$}\UnaryInfC{$\top$}\DisplayProof & $\longmapsto$ & $(\mathsf{U}t)$ & where $t$ correspond to $f'$.
\end{longtable}
\end{scriptsize} 

The $\lambda$-term corresponding to $\Box$-introduction is then ruled by:
\begin{prooftree}
\AxiomC{$\Gamma_{1}\vdash t_{1}:\Box A_{1}$}\AxiomC{$\cdots$}\AxiomC{$\Gamma_{n}\vdash t_{n}:\Box A_{n}$}\AxiomC{$x_{1}:A_{1},\cdots,x_{n}:A_{n},\Delta\vdash s:B$}\QuaternaryInfC{$\Gamma_{1},\cdots,\Gamma_{n},\Delta\vdash\mathsf{B}_{x_{1},\cdots,x_{n}}(t_{1},\cdots,t_{n})\,\mathsf{in}\, s\;:\Box B$}
\end{prooftree}

%Here we see that just by decorating $\mathsf{IEL}^{-}$-deductions with proof names we have a term-calculus making clear the computational interpretation of the belief operator.

\section{Normalization for $\mathsf{IEL}^{-}$-deductions}\label{sec2}

The modal $\lambda$-calculus just presented gives a neat notation for investigating the structural properties of $\mathsf{IEL}^{-}$ by allowing an equational reasoning on proof-terms.

As it is known, by imposing rewriting rules on $\mathsf{NJ}$-deductions -- as defined in~\cite{prawitz1971} --, we obtain the complete engine of \sloppy\mbox{$\lambda$-calculus} associated to that natural deduction.

For our modal $\lambda$-calculus, we need to add to system corresponding to the propositional fragment of $\mathsf{NJ}$ the following rewritings:
\begin{definition}[Modal rewritings]\label{riscritture}$\,$\\
\begin{enumerate}

\item $\mathsf{B}_{x_{1},\cdots,x_{i-1},x_{i},x_{i+1},\cdots,x_{n}}(t_{1},\cdots,t_{i-1},(\mathsf{B}_{\vec{y}}(\vec{s})\;\mathsf{in}\;t_{i}),t_{i+1},\cdots,t_{n})\;\mathsf{in}\; r\,\\ >_{M} \mathsf{B}_{x_{1},\cdots,x_{i-1},\vec{y},x_{i+1},\cdots,x_{n}}(t_{1},\cdots,t_{i-1},\vec{s},t_{i+1},\cdots,t_{n})\;\mathsf{in}\;r[x_{i}:=t_{i}]$
\item $\mathsf{B}_{x} t \;\mathsf{in}\; x >_{M} t$ 
\end{enumerate}
\end{definition}

In~\cite{brogi}, the present author already gave a quick proof of strong normalization and confluence for the full modal $\lambda$-calculus corresponding to $\mathsf{IEL}^{-}$ by tweaking a proof strategy due to~\cite{kakutani} for the implicational fragment of intuitionistic minimal modal logic $\mathbb{IK}$. Therefore, we already know that every $\mathsf{IEL}^{-}$-deductions has a unique normal form, as required.

That system of rewritings, however, does not suffice to establish the subformula property for $\mathsf{IEL}^{-}$: in order to achieve that, we need further reductions between proof-terms involving $\bot$- and $\vee$-elimination. Unfortunately, the translation adopted in that context does not preserve these additional rewritings, so that a different proof is required.

Our strategy consists of an extension of the method developed in \cite{degroote}, and, as in that paper, we use a modified CPS translation into simple type theory that is arithmetizable without using higher-order reasoning.

We commit the next pages to develop such a proof.

\subsection{Normalization for $\vee$-permutations}

Let $\lambda^{\Box\rightarrow\wedge\vee}$ denote the type theory corresponding to the fragment of $\mathsf{IEL}^{-}$ with $\Box$-modality, implication, conjunction and disjunctions as operators.

Its syntax is given by the following grammar:

$$ T ::= p\;|\; A \rightarrow B\; |\; A \times B\; |\; A + B\; |\; \Box A$$
\begin{center}

\begin{tabular}{l l}
$t ::=\;$ & $ x\; |\; \lambda x:A. t:B\; |\; t_{1}t_{2}\; |\; \langle t_{1},t_{2}\rangle\;|\;\pi_{1}(t)\;|\;\pi_{2}(t)\;|\;\mathsf{in}_{1}(a:A)\;|\;\mathsf{in}_{2}(b:B)\;|\;\mathsf{C}_{x,y}(t, t_{1},t_{2})\;|\;$\\ & $\mathsf{B}_{\vec{x}:\vec{A}}\,\vec{t}:\vec{\Box A} \;\mathsf{in}\; (s:B): \Box B$ .\\ 
\end{tabular}
\end{center}

As stated before, in order to obtain the subformula property for our natural deduction, the rewritings introduced in Definition~\ref{riscritture} are not enough. Therefore we add the following reductions:
\begin{definition}[Permutations for $\lambda^{\Box\rightarrow\wedge\vee}$]\label{permut}$\,$\\
\begin{enumerate}
\item $\mathsf{C}_{x,y}(t,t_{1},t_{2})s\,>_{P}\,\mathsf{C}_{x,y}(t,t_{1}s,t_{2}s)$
\item $\pi_{i}\mathsf{C}_{x,y}(t,t_{1},t_{2})\,>_{P}\,\mathsf{C}_{x,y}(t,\pi_{i}t_{1},\pi_{i}t_{2})$ for $i=1,2$
\item $C_{u,v}(C_{x,y}(t,t_{1},t_{2}),s_{1},s_{2})\,>_{P}\,\mathsf{C}_{x,y}(t,\mathsf{C}_{u,v}(t_{1},s_{1},s_{2}),\mathsf{C}_{u,v}(t_{2},s_{1},s_{2}))$
\item $\mathsf{B}_{x1,\cdots,x_{i-1},x_{i},x_{i+1},\cdots,x_{n}}\,(t_{1},\cdots,t_{i-1},\mathsf{C}_{x,y}(t,s_{1},s_{2}),t_{i+1},\cdots,t_{n})\,\mathsf{in}\,s\,>_{P}\\
\mathsf{C}_{x,y}(t,\mathsf{B}_{x1,\cdots,x_{i-1},x_{i},x_{i+1},\cdots,x_{n}}(t_{1},\cdots,t_{i-1},s_{1},t_{i+1},\cdots,t_{n})\,\mathsf{in}\,s,\\\;\mathsf{B}_{x1,\cdots,x_{i-1},x_{i},x_{i+1},\cdots,x_{n}}(t_{1},\cdots,t_{i-1},s_{2},t_{i+1},\cdots,t_{n})\,\mathsf{in}\,s)$.
\end{enumerate}
\end{definition}

We now assign a norm to $\lambda^{\Box\rightarrow\wedge\vee}$-terms.

\begin{definition}[Permutation degree for $\lambda^{\Box\rightarrow\wedge\vee}$]$\,$\\
\begin{enumerate}
\item $|x|=1$
\item $|\lambda x.t|=|t|$
\item $|ts|=|t|+\#t\times|s|$
\item $|\langle t,s\rangle|=|t|+|s|$
\item $|\pi_{i}t|=|t|+\#t$ for $i=1,2$
\item $|\mathsf{in}_{i}t|=|t|$ for $i=1,2$
\item $|\mathsf{C}_{x,y}(t,t_{1},t_{2})|=|t|+\#t\times(|t_{1}|+|t_{2}|)$
\item $|\mathsf{B}_{x_{1},\cdots,x_{n}}\,(t_{1},\cdots,t_{n})\,\mathsf{in}\, s|=|s|\times\prod^{n}_{i=0}|t_{i}|+\prod_{i=0}^{n}\#t_{i}$

where

\item $\#x=1$
\item $\#\lambda x.t=1$
\item $\#ts=\#t$
\item $\#\langle t,s\rangle=1$
\item $\#\pi_{i}t=\#t$ for $i=1,2$
\item $\#\mathsf{in}_{i}t=1$ for $i=1,2$
\item $\#\mathsf{C}_{x,y}(t,t_{1},t_{2})=2\times\#t\times(\#t_{1}+\#t_{2})$
\item $\#\mathsf{B}_{x_{1},\cdots,x_{n}}\,(t_{1},\cdots,t_{n})\,\mathsf{in}\, s=\#s\times\prod_{i=0}^{n}\#t_{i}$
\end{enumerate}

\end{definition}

Now we prove that this norm decreases after a permutation.

\begin{lemma}\label{lem6}For any $\lambda^{\Box\rightarrow\wedge\vee}$-terms $t,s$, if $t>_{P}s$, then $\#t=\#s$.
\end{lemma}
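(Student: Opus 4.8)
The plan is to prove the statement by structural induction on the term $t$, following the standard pattern for showing that an auxiliary measure is invariant under a rewriting relation. The relation $>_P$ is generated by the four permutation schemes of Definition~\ref{permut}, so it suffices to verify that $\#t=\#s$ whenever $t>_P s$ is a single application of one of these four schemes at the top level, and then lift this to arbitrary redex positions by checking that each term-former $\#$-computes its output from the $\#$-values of its immediate subterms (so that replacing a subterm by a $\#$-equal one leaves the whole $\#$-value unchanged). This second, congruence, part is immediate from the recursive clauses defining $\#$ in the second block of the degree definition, since every clause for $\#$ depends on its subterms only through their own $\#$-values; I would dispatch it with one sentence.

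The core of the argument is therefore the four base cases, one per permutation rule, each a direct computation using the clauses for $\#$. For the first permutation, $\#\bigl(\mathsf{C}_{x,y}(t,t_1,t_2)\,s\bigr)=\#\mathsf{C}_{x,y}(t,t_1,t_2)=2\times\#t\times(\#t_1+\#t_2)$, while on the right-hand side $\#\mathsf{C}_{x,y}(t,t_1 s,t_2 s)=2\times\#t\times(\#(t_1 s)+\#(t_2 s))=2\times\#t\times(\#t_1+\#t_2)$, using $\#(t_i s)=\#t_i$; these agree. For the second rule the clause $\#\pi_i u=\#u$ makes the application of $\pi_i$ transparent to $\#$, so both sides reduce to $2\times\#t\times(\#t_1+\#t_2)$ in the same way. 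The third rule is the genuinely arithmetical case: on the left one expands $\#C_{u,v}(C_{x,y}(t,t_1,t_2),s_1,s_2)=2\times\#C_{x,y}(t,t_1,t_2)\times(\#s_1+\#s_2)$, while on the right one expands the outer $\mathsf{C}$ over $t$ and the two inner $\mathsf{C}$'s over $t_1$ and $t_2$; the identity to check is
\[
2\times\bigl(2\,\#t(\#t_1+\#t_2)\bigr)(\#s_1+\#s_2)
=2\,\#t\Bigl(2\,\#t_1(\#s_1+\#s_2)+2\,\#t_2(\#s_1+\#s_2)\Bigr),
\]
which holds by distributing the common factor $2(\#s_1+\#s_2)$ over $\#t_1+\#t_2$. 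The fourth rule, permuting a case-split out of a $\mathsf{B}$-argument, uses $\#\mathsf{B}_{\vec x}(\ldots)\,\mathsf{in}\,s=\#s\times\prod_i\#t_i$ on both sides and the same distributivity of $\#\mathsf{C}$; one checks that the product $\prod_i\#t_i$ with the $i$-th factor equal to $\#\mathsf{C}_{x,y}(t,s_1,s_2)=2\,\#t(\#s_1+\#s_2)$ matches the right-hand side, where $\#t$ and the factor $2$ are pulled out by the outer $\mathsf{C}$ and $(\#s_1+\#s_2)$ is split across the two branches.

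The only point requiring care — and what I expect to be the main obstacle — is the third case, since it is the one where the multiplicative structure of $\#\mathsf{C}$ (the factor $2\times\#t$ and the \emph{sum} $\#t_1+\#t_2$) has to interact correctly with a nested case-split; the design of the coefficients $2$ and of the use of a sum rather than a max in the $\mathsf{C}$-clause is precisely what makes the left and right sides coincide, so the verification is really a check that the definition of $\#$ was chosen to be a homomorphism-like quantity under these permutations. All four computations are elementary polynomial identities over $\mathbb{N}$, so once the base cases are settled the induction closes immediately, and no subtlety arises from variable binding because $\#$ ignores the binding annotations entirely.
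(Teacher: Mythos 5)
Your proof is correct and follows essentially the same approach as the paper: a case analysis on the permutation rules with direct computation of $\#$ on both sides, the key case being the $\Box,\vee$-permutation where $\#\mathsf{B}$ and $\#\mathsf{C}$ distribute as you describe. The only difference is one of economy: the paper cites de Groote's Lemma~4 for the three non-modal permutations and computes only the modal case, whereas you verify all four (and make the congruence-closure step explicit), which the paper leaves implicit.
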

\begin{proof}
Cases involving traditional operators are dealt with in~\cite[lemma 4]{degroote}. 

We just have to prove the claim for the $\Box,\vee$-permutation:

\vspace{.63cm}

\begin{tabular}{l l l}
$\#\mathsf{B}_{x1,\cdots,x_{i-1},x_{i},x_{i+1},\cdots,x_{n}}\,(t_{1},\cdots,t_{i-1},\mathsf{C}_{x,y}(t,s_{1},s_{2}),t_{i+1},\cdots,t_{n})\,\mathsf{in}\,s$ & $=$ & \\
$\#s\times\#\mathsf{C}_{x,y}(t,s_{1},s_{2})\times\prod_{j=0}^{i-1}\#t_{j}\times\prod_{j=i+1}^{n}\#t_{j}$ & $=$ & \\
 $\#s\times 2\times\#t\times(\#s_{1}+\#s_{2})\times\prod_{j=0}^{i-1}\#t_{j}\times\prod_{j=i+1}^{n}\#t_{j}$ . \\ 
\end{tabular}

\vspace{.63cm}

\begin{tabular}{l l l}
$\#\mathsf{C}_{x,y}(t,\mathsf{B}_{x1,\cdots,x_{i-1},x_{i},x_{i+1},\cdots,x_{n}}(t_{1},\cdots,t_{i-1},s_{1},t_{i+1},\cdots,t_{n})\,\mathsf{in}\,s,$ \\ $\qquad\,\,\,\,\mathsf{B}_{x1,\cdots,x_{i-1},x_{i},x_{i+1},\cdots,x_{n}}(t_{1},\cdots,t_{i-1},s_{2},t_{i+1},\cdots,t_{n})\,\mathsf{in}\,s)$ & $=$ & \\
$2\times\#t\times(\#\mathsf{B}_{x1,\cdots,x_{i-1},x_{i},x_{i+1},\cdots,x_{n}}(t_{1},\cdots,t_{i-1},s_{1},t_{i+1},\cdots,t_{n})\,\mathsf{in}\,s\;+$ \\
$\qquad\qquad\,\#\mathsf{B}_{x1,\cdots,x_{i-1},x_{i},x_{i+1},\cdots,x_{n}}(t_{1},\cdots,t_{i-1},s_{2},t_{i+1},\cdots,t_{n})\,\mathsf{in}\,s)$ & $=$ \\
$2\times\#t\times(\#s\times\prod_{j=0}^{i-1}\#t_{j}\times\#s_{1}\times\prod_{j=i+1}^{n}\#t_{j}+\#s\times\prod_{j=0}^{i-1}\#t_{j}\times\#s_{2}\times\prod_{j=i+1}^{n}\#t_{j})$ & $=$ \\
$\#s\times 2\times\#t\times(\#s_{1}+\#s_{2})\times\prod_{j=0}^{i-1}\#t_{j}\times\prod_{j=i+1}^{n}\#t_{j}$ .
\end{tabular}
\end{proof}

\begin{lemma}For any $\lambda^{\Box\rightarrow\wedge\vee}$-terms $t,s$, if $t>_{P}s$, then $|t|>|s|$.
\end{lemma}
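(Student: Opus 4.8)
The plan is to prove that each permutation rule strictly decreases the norm $|\cdot|$ by a direct case analysis over the four clauses of Definition~\ref{permut}, mirroring the structure of Lemma~\ref{lem6}. The decisive tool will be the previous lemma: since $\#t=\#s$ whenever $t>_{P}s$, all the multiplicative weights $\#(-)$ appearing in the norm computations are \emph{invariant} under reduction. This means that when I expand $|t|$ and $|s|$ using the defining equations, the $\#$-factors on both sides match exactly, and the strict inequality must come purely from the additive/multiplicative arrangement of the $|\cdot|$-components. I would therefore treat $\#$-values as fixed positive integer coefficients throughout each case and isolate the genuine combinatorial content.

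First I would dispatch the three cases involving only the traditional operators (clauses 1--3), for which I can cite \cite[lemma 4]{degroote} exactly as the preceding lemma did; these are the permutations pushing application, projection, and a nested case-split through an outer $\mathsf{C}$, and they are already handled in De~Groote's setting. The only case requiring fresh work is clause~4, the $\Box,\vee$-permutation. Here I would compute $|t|$ for the redex $\mathsf{B}_{\vec{x}}(t_{1},\dots,\mathsf{C}_{x,y}(t,s_{1},s_{2}),\dots,t_{n})\,\mathsf{in}\,s$ and $|s|$ for the contractum, using clauses (7) and (8) of the permutation-degree definition. On the redex side, the $\mathsf{C}$-term sits in the $i$-th argument slot, so its $|\cdot|$-value $|t|+\#t\times(|s_{1}|+|s_{2}|)$ enters the product $|s|\times\prod_j |t_j|$ as a single factor; after expanding, the leading term is proportional to $|s|\cdot\bigl(|t|+\#t(|s_1|+|s_2|)\bigr)\cdot\prod_{j\neq i}|t_j|$. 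On the contractum side I get a sum of two $\mathsf{B}$-terms wrapped in an outer $\mathsf{C}$, whose norm by clause (7) is $|t|+\#t\times(|B_1|+|B_2|)$ where each $|B_k|=|s|\cdot|s_k|\cdot\prod_{j\neq i}|t_j|+\prod_{j\neq i}\#t_j\cdot\#s_k$.

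The key estimate I expect to drive the strict inequality is that on the redex side the additive $\prod_j \#t_j$ summand of clause~(8) is multiplied into the product $|s|\prod_j|t_j|$ through the $\mathsf{C}$-factor's $|t|+\#t(\dots)$ shape, producing a term with an honest factor of $|s|\cdot|t|\cdot\prod_{j\neq i}|t_j|\geq 1$ that the contractum never fully reproduces, since on the contractum side the $|t|$ contribution appears only additively (as the isolated $+|t|$ from the outer $\mathsf{C}$) rather than multiplied by the whole remaining product. Concretely, after canceling the common $\#$-controlled terms guaranteed equal by Lemma~\ref{lem6}, I would show the difference $|t|-|s|$ reduces to a sum of manifestly positive monomials in the $|\cdot|$- and $\#$-values, each at least $1$ because every norm satisfies $|r|\geq 1$ and $\#r\geq 1$ by induction on term structure.

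The main obstacle will be bookkeeping: the $\mathsf{B}$-norm in clause~(8) is a \emph{product} $|s|\times\prod_{i=0}^n|t_i|$ plus a \emph{product} of $\#$-values, so substituting a composite $\mathsf{C}$-term into one factor produces cross-terms whose cancellation against the two-summand contractum is not visually obvious. I would manage this by first recording the preliminary fact that $|r|\geq 1$ and $\#r\geq 1$ for every term $r$ (a trivial induction), then factoring out $\prod_{j\neq i}|t_j|$ and $\prod_{j\neq i}\#t_j$ wherever possible so that the comparison collapses to an inequality in the three ``local'' quantities $|t|,|s_1|+|s_2|,|s|$ together with their $\#$-counterparts. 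Once the expressions are in this factored form, the strictness should follow because the redex contributes a strictly larger coefficient to the dominant monomial; verifying that no reindexing error has crept into the $\prod_{j=0}^{i-1}$ versus $\prod_{j=i+1}^{n}$ splitting is the one point where I would be most careful.
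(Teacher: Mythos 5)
Your overall skeleton---citing de Groote for the non-modal permutations and directly expanding both sides of the $\Box,\vee$-permutation via clauses (7) and (8) of the degree definition---is the same as the paper's. But your plan for extracting the \emph{strict} inequality is wrong at the decisive point. You propose to treat the $\#$-summands on the two sides as ``matching exactly'' (appealing to Lemma~\ref{lem6}) and to cancel them, so that strictness comes from the $|\cdot|$-components alone. Neither half of this works. First, Lemma~\ref{lem6} asserts $\#(\mathrm{redex})=\#(\mathrm{contractum})$ for the \emph{whole} terms; it says nothing about the $\#$-summands that appear when you unfold $|\cdot|$, and in fact those summands are \emph{not} equal: the redex contributes $\#\mathsf{C}_{x,y}(t,s_{1},s_{2})\cdot\prod_{j\neq i}\#t_{j}=2\,\#t\,(\#s_{1}+\#s_{2})\prod_{j\neq i}\#t_{j}$, while the contractum contributes only $\#t\,(\#s_{1}+\#s_{2})\prod_{j\neq i}\#t_{j}$---they differ by exactly the factor $2$ that is built into the definition $\#\mathsf{C}_{x,y}(t,t_{1},t_{2})=2\times\#t\times(\#t_{1}+\#t_{2})$. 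Second, the $|\cdot|$-components alone do not yield strictness: writing $P=\prod_{j\neq i}|t_{j}|$, the redex-minus-contractum difference in those components is $|t|\,(|s|\,P-1)$, which vanishes whenever $|s|\,P=1$, e.g.\ for the term $\mathsf{B}_{x_{1}}(\mathsf{C}_{x,y}(t,s_{1},s_{2}))\,\mathsf{in}\,x_{1}$, where the body is a variable of norm $1$ and the product over $j\neq i$ is empty. So after your proposed cancellation you would be left with a quantity that can be $0$, and your claim that the difference is a sum of monomials ``each at least $1$'' fails.

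The correct accounting---which is what the paper's computation does---is the opposite of your plan: the $|\cdot|$-terms give only the weak estimate $|s|\,P\,|t|\geq|t|$, and the strictness is carried entirely by the $\#$-terms, namely $2\,\#t\,(\#s_{1}+\#s_{2})\prod_{j\neq i}\#t_{j}>\#t\,(\#s_{1}+\#s_{2})\prod_{j\neq i}\#t_{j}$, which is strict because all $\#$-values are positive. Once you stop cancelling the $\#$-summands and let that factor $2$ do the work, your expansion goes through exactly as in the paper; note that Lemma~\ref{lem6} is in fact never needed in this proof. (A minor point: for the non-modal cases the relevant citation is de Groote's Lemma 5; Lemma 4 is the one about $\#$-preservation, used for the preceding lemma.)
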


\begin{proof}
As for Lemma~\ref{lem6}, cases involving traditional operators are dealt with in~\cite[lemma 5]{degroote}. It remains to prove the claim for the $\Box,\vee$-permutation only:

\begin{tabular}{l l l}
$|\mathsf{B}_{x1,\cdots,x_{i-1},x_{i},x_{i+1},\cdots,x_{n}}\,(t_{1},\cdots,t_{i-1},\mathsf{C}_{x,y}(t,s_{1},s_{2}),t_{i+1},\cdots,t_{n})\,\mathsf{in}\,s|$ & $=$ & \\
 $|s|\times|\mathsf{C}_{x,y}(t,s_{1},s_{2})|\times\prod_{j=0}^{i-1}|t_{j}|\times\prod_{j=i+1}^{n}|t_{j}|+\prod_{j=0}^{i-1}\#t_{j}\times\#\mathsf{C}_{x,y}(t,s_{1},s_{2})\times\prod_{j=i+1}^{n}\#t_{j}$ & $=$ \\
 $(|s|\times\prod_{j=0}^{i-1}|t_{j}|\times\prod_{j=i+1}^{n}|t_{j}|\times(|t|+\#t\times(|s_{1}|+|s_{2}|))\;+$ \\
 $\prod_{j=0}^{i-1}\#t_{j}\times\prod_{j=i+1}^{n}\#t_{j}\times(2\times\#t\times(\#s_{1}+\#s_{2})))$ & $=$ \\
  $(|s|\times\prod_{j=0}^{i-1}|t_{j}|\times\prod_{j=i+1}^{n}|t_{j}|\times|t|\;+$ \\
  $\#t\times|s|\times\prod_{j=0}^{i-1}|t_{j}|\times\prod_{j=i+1}^{n}|t_{j}|\times|s_{1}|+\#t\times|s|\times\prod_{j=0}^{i-1}|t_{j}|\times\prod_{j=i+1}^{n}|t_{j}|\times|s_{2}|\,+$ \\
  $2\times\#t\times\prod_{j=0}^{i-1}\#t_{j}\times\prod_{j=i+1}^{n}\#t_{j}\times\#s_{1}+2\times\#t\times\prod_{j=0}^{i-1}\#t_{j}\times\prod_{j=i+1}^{n}\#t_{j}\times\#s_{1})$ & $>$ \\
  $(|t|+\#t\times|s|\times\prod_{j=0}^{i-1}|t_{j}|\times\prod_{j=i+1}^{n}|t_{j}|\times|s_{1}|+\#t\times|s|\times\prod_{j=0}^{i-1}|t_{j}|\times\prod_{j=i+1}^{n}|t_{j}|\times|s_{2}|\;+$ \\
  $\#t\times\prod_{j=0}^{i-1}\#t_{j}\times\prod_{j=i+1}^{n}\#t_{j}\times\#s_{1}+\#t\times\prod_{j=0}^{i-1}\#t_{j}\times\prod_{j=i+1}^{n}\#t_{j}\times\#s_{2})$ & $=$ \\
  $(|t|+\#t\times(|s|\times\prod_{j=0}^{i-1}|t_{j}|\times\prod_{j=i+1}^{n}|t_{j}|\times|s_{1}|+|s|\times\prod_{j=0}^{i-1}|t_{j}|\times\prod_{j=i+1}^{n}|t_{j}|\times|s_{2}|\;+$ \\
  $\prod_{j=0}^{i-1}\#t_{j}\times\prod_{j=i+1}^{n}\#t_{j}\times\#s_{1}+\times\prod_{j=0}^{i-1}\#t_{j}\times\prod_{j=i+1}^{n}\#t_{j}\times\#s_{2}))$ & $=$ \\
  $(|t|+\#t\times(|\mathsf{B}_{x1,\cdots,x_{i-1},x_{i},x_{i+1},\cdots,x_{n}}(t_{1},\cdots,t_{i-1},s_{1},t_{i+1},\cdots,t_{n})\,\mathsf{in}\,s|\;+$ \\
  $|\mathsf{B}_{x1,\cdots,x_{i-1},x_{i},x_{i+1},\cdots,x_{n}}(t_{1},\cdots,t_{i-1},s_{2},t_{i+1},\cdots,t_{n})\,\mathsf{in}\,s|))$ & $=$\\
  $|(\mathsf{C}_{x,y}(t,\mathsf{B}_{x1,\cdots,x_{i-1},x_{i},x_{i+1},\cdots,x_{n}}(t_{1},\cdots,t_{i-1},s_{1},t_{i+1},\cdots,t_{n})\,\mathsf{in}\,s,$ \\ $\qquad\,\,\,\,\mathsf{B}_{x1,\cdots,x_{i-1},x_{i},x_{i+1},\cdots,x_{n}}(t_{1},\cdots,t_{i-1},s_{2},t_{i+1},\cdots,t_{n})\,\mathsf{in}\,s)|$,
\end{tabular}
as required.
\end{proof}

As an immediate consequence we have strong normalization w.r.t.~$\vee$-permutations:

\begin{lemma}\label{normperm}$\lambda^{\Box\rightarrow\wedge\vee}$-calculus is strongly normalizing w.r.t.~$>_{P}$.
\end{lemma}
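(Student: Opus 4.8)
The plan is to read off strong normalization from the two preceding lemmas by exhibiting $|\cdot|$ as a termination measure valued in the positive integers. The two facts I would lean on are: first, that $\#$ is invariant along $>_{P}$ (Lemma~\ref{lem6}); and second, that $|\cdot|$ strictly decreases at every principal permutation redex (the lemma immediately preceding this one). Since $|x|=1$ and every defining clause of $|\cdot|$ is a sum of products of strictly positive quantities, the norm always takes values in $\{1,2,3,\dots\}$, so it is bounded below.

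The one point that needs care is lifting the strict-decrease property from a contracted principal redex to a redex contracted inside an arbitrary context, which is what a one-step $>_{P}$ reduction really is. Here Lemma~\ref{lem6} is doing the essential work. If $t'>_{P}s'$ occurs as an immediate subterm of a larger term $u$, then passing from $t'$ to $s'$ replaces $|t'|$ by the strictly smaller value $|s'|$, while by Lemma~\ref{lem6} it leaves $\#t'=\#s'$, and it obviously leaves the $\#$-values of all sibling subterms untouched. Inspecting each clause of the definition of $|\cdot|$, the norm of $u$ is, in each case, an expression that is \emph{strictly monotone increasing} in $|t'|$ whenever the coefficient multiplying it is positive: for an application the coefficient of $|s|$ is $\#t\ge 1$; for a pair and a $\mathsf{C}$-term the $|\cdot|$ of each component enters with a coefficient that is a product of $\#$-values, all $\ge 1$; and for the modal clause $|s|\times\prod_{i}|t_{i}|+\prod_{i}\#t_{i}$ each factor is again $\ge 1$, so the product is strictly monotone in each $|t_{i}|$ and in $|s|$. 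Because every coefficient is preserved (thanks to the $\#$-invariance) and strictly positive, decreasing $|t'|$ strictly decreases $|u|$. Hence $t>_{P}s$ implies $|t|>|s|$ for reductions in any context, not merely at the top level.

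With this in hand the conclusion is immediate: any purported $>_{P}$-reduction sequence $t_{0}>_{P}t_{1}>_{P}t_{2}>_{P}\cdots$ gives rise to a strictly descending chain $|t_{0}|>|t_{1}|>|t_{2}|>\cdots$ of positive integers, which must be finite. Therefore no infinite $>_{P}$-reduction exists, i.e.\ $\lambda^{\Box\rightarrow\wedge\vee}$ is strongly normalizing with respect to $>_{P}$.

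I expect the only genuine obstacle to be the in-context monotonicity step, and specifically the need to keep every inequality \emph{strict}: this is exactly why Lemma~\ref{lem6} is indispensable, since the multiplicative $\#$-coefficients appearing throughout the definition of $|\cdot|$ must stay fixed (and remain $\ge 1$) for the strict-decrease of a subterm's norm to propagate to the enclosing term. Once that monotonicity is checked clause by clause against the definition of $|\cdot|$, nothing further is required.
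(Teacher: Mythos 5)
Your proof is correct and takes essentially the same approach the paper intends: the paper states this lemma as an immediate consequence of the two preceding norm lemmas, i.e.\ $|\cdot|$ is a positive-integer-valued measure that strictly decreases along every $>_{P}$ step, so an infinite reduction sequence would yield an impossible infinite descending chain of positive integers. Your explicit clause-by-clause check that the strict decrease propagates through arbitrary contexts -- with the $\#$-invariance of Lemma~\ref{lem6} keeping the multiplicative coefficients fixed -- is exactly the detail the paper leaves implicit by deferring to de Groote's verification.
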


Notice that this result does not involve types, but it, as previously stated, is mandatory for obtaining the subformula property for the typed system.

\subsection{Detour elimination}
We want to eliminate useless steps of computation from our $\mathsf{IEL}^{-}$-deductions. For the moment, we shall restrict to the $\lambda^{\Box\rightarrow\wedge\vee}$-calculus for proof-terms, and extend the standard rewritings for the corresponding fragment of intuitionistic natural deduction by the reduction $>_{M}$ introduced in Definition~\ref{riscritture}. The resulting system is the following:
\begin{definition}[Detour conversions for $\lambda^{\Box\rightarrow\wedge\vee}$]$\,$\\
\begin{enumerate}
\item $(\lambda x.t)s\,>_{D}\,t[x:=s]$
\item $\pi_{i}\langle t_{1},t_{2}\rangle\,>_{D}\,t_{i}$ for $i=1,2$
\item $C_{x_{1},x_{2}}(\mathsf{in}_{i}t,t_{1},t_{2})\,>_{P}\,t_{i}[x_{i}:=t]$ for $i=1,2$
\item $\mathsf{B}_{x_{1},\cdots,x_{i-1},x_{i},x_{i+1},\cdots,x_{n}}(t_{1},\cdots,t_{i-1},(\mathsf{B}_{\vec{y}}(\vec{s})\;\mathsf{in}\;t_{i}),t_{i+1},\cdots,t_{n})\;\mathsf{in}\; r\,\\ >_{D} \mathsf{B}_{x_{1},\cdots,x_{i-1},\vec{y},x_{i+1},\cdots,x_{n}}(t_{1},\cdots,t_{i-1},\vec{s},t_{i+1},\cdots,t_{n})\;\mathsf{in}\;r[x_{i}:=t_{i}]$
\item $\mathsf{B}_{x} t \;\mathsf{in}\; x >_{D} t$ 

\end{enumerate}
\end{definition}

In general, $>_{\ldots}$ will denote a one-step reduction relation between $\lambda$-terms. In the following, we use $\overset{\mathsmaller{+}}{>_{\ldots}}$ for the transitive closure of $>_{\ldots}$, and $\gg_{\ldots}$ for its reflexive transitive closure.

As stated before, it is possible to show that $\mathsf{IEL}^{-}$-deductions do normalise w.r.t.~$>_{D}$ by simulating modal rewritings as $\rightarrow$-rewritings in simple type theory~\cite{brogi}. But since we want to consider permutations also, that strategy must be enforced to make the underlying translation compatible with $>_{P}$.

In the following, we will see that it is possible to reduce $\lambda^{\Box\rightarrow\wedge\vee}$ to simple type theory -- where $\rightarrow$ is the only type -- by a modified general negative translation that is able to map normal forms into normal forms.
Again, we accommodate the definitions in~\cite[§4]{degroote} to cover the modal constructions.

\begin{definition}[CPS-translation] Let $\overline{A}$ be the translation for the type $A$ of $\lambda^{\Box\rightarrow\wedge\vee}$ defined by:
$$\overline{A}\,=\,\sim\sim A^{\circ}$$ where $\sim A = A\rightarrow q$ for a specific atomic type $q$, and where
\begin{itemize}
\item $p^{\circ}=p$
\item $(A\rightarrow B)^{\circ}=\overline{A}\rightarrow\overline{B}$
\item $(A\wedge B)^{\circ}=\,\sim(\overline{A}\rightarrow\sim\overline{B})$
\item $(A\vee B)^{\circ}=\,\sim\overline{A}\rightarrow\sim\sim\overline{B}$
\item $(\Box A)^{\circ}=\,\sim\sim\overline{A}$
\end{itemize}

For terms, we define
\begin{itemize}
\item $\overline{x}=\lambda k. xk$
\item $\overline{\lambda x.t}=\lambda k.k(\lambda x.\overline{t})$
\item $\overline{ts}=\lambda k.\overline{t}(\lambda m. m\overline{s}k)$
\item $\overline{\langle t,s\rangle}=\lambda k.k(\lambda u. u\overline{t}\overline{s})$
\item $\overline{\pi_{1}t}=\lambda k.\overline{t}(\lambda u.u(\lambda i.\lambda j.ik))$
\item $\overline{\pi_{2}t}=\lambda k.\overline{t}(\lambda u.u(\lambda i.\lambda j.jk))$
\item $\overline{\mathsf{in}_{1}t}=\lambda k.k(\lambda i.\lambda j.i\overline{t})$
\item $\overline{\mathsf{in}_{2}t}=\lambda k.k(\lambda i.\lambda j.j\overline{t})$
\item $\overline{\mathsf{C}_{x,y}(t,t_{1},t_{2})}=\lambda k.\overline{t}(\lambda m.m(\lambda x.\overline{t_{1}}k)(\lambda y.\overline{t_{2}}k))$
\item $\overline{\mathsf{B}_{x_{1},\cdots,x_{n}}(t_{1},\cdots,t_{n})\,\mathsf{in}\,s}=\lambda k.\overline{t_{1}}(\lambda x_{1}.\cdots\overline{t_{n}}(\lambda x_{n}.k\overline{s})\cdots)$
\end{itemize} 

where $k,m,u,i,j$ are fresh variables.
\end{definition}

It is now easy to prove that this translation preserves typing:

\begin{proposition}\label{typing} Let $t$ be a $\lambda^{\Box\rightarrow\wedge\vee}$-term with type $A$ in context $\Gamma$. Then $\overline{t}$ is a term of simple type theory with type $\overline{A}$ in context $\overline{\Gamma}$.
\end{proposition}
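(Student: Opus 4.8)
The plan is to argue by \emph{structural induction on the term $t$} (equivalently, on the derivation of $\Gamma\vdash t:A$), proving in each case that $\overline{t}$ inhabits $\overline{A}$ in the pointwise-translated context $\overline{\Gamma}=\{x:\overline{C}\mid (x:C)\in\Gamma\}$. The induction hypothesis is exactly the statement of the proposition applied to the immediate subterms, and since every term constructor of $\lambda^{\Box\rightarrow\wedge\vee}$ arises from a unique typing rule, the whole case analysis is driven by the outermost shape of $t$. The only point where the modal clauses enter is the $\Box$-introduction step, so I would dispose of the propositional cases first and concentrate the work there.

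For the base case $t\equiv x$ with $(x:A)\in\Gamma$, I would simply read off types from $\overline{x}=\lambda k.xk$: giving the fresh continuation $k$ the type $\sim A^{\circ}$ forces $x:\sim\sim A^{\circ}=\overline{A}$, which is precisely the declaration of $x$ in $\overline{\Gamma}$, so $\overline{x}:\sim\sim A^{\circ}=\overline{A}$. The propositional constructors — $\lambda$-abstraction and application, pairing and projections, the injections and the case construct $\mathsf{C}$ — are handled exactly as in \cite[§4]{degroote}. In each of these cases the recipe is uniform: unfold the defining clause of $\overline{t}$, assign the outer continuation variable the type $\sim A^{\circ}$ dictated by the target $\overline{A}=\sim\sim A^{\circ}$, and then check, using the induction hypothesis on the subterms together with the defining clauses of $(\cdot)^{\circ}$ for $\rightarrow$, $\wedge$ and $\vee$, that every administrative application is well typed and that the body has type $q$.

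The genuinely new case, and the one I expect to carry the weight of the argument, is $\Box$-introduction, where $t\equiv\mathsf{B}_{x_{1},\cdots,x_{n}}(t_{1},\cdots,t_{n})\,\mathsf{in}\,s$ with $t_{i}:\Box A_{i}$ and $s:B$ under the discharged hypotheses $x_{i}:A_{i}$, so that $A\equiv\Box B$ and the target is $\overline{\Box B}=\sim\sim(\Box B)^{\circ}$. Its translation $\overline{t}=\lambda k.\overline{t_{1}}(\lambda x_{1}.\cdots\overline{t_{n}}(\lambda x_{n}.k\overline{s})\cdots)$ is a nested chain of continuations, and I would verify it by threading types from the inside out: by the induction hypothesis $\overline{s}:\overline{B}$ with each $x_{i}:\overline{A_{i}}$, so one checks that $k\overline{s}:q$, that each intermediate abstraction $\lambda x_{i}.(\cdots)$ has exactly the type demanded of a continuation for $\overline{t_{i}}:\overline{\Box A_{i}}$, and finally that $\lambda k.(\cdots):\sim\sim(\Box B)^{\circ}$. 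The delicate part will be the matching between the type $\overline{A_{i}}$ at which the discharged variable $x_{i}$ occurs in $\overline{s}$ and the type that $\overline{t_{i}}$ supplies to the continuation that re-binds it; this is precisely what the modal clause for $(\cdot)^{\circ}$ is engineered to reconcile, so although the bookkeeping is heavy, the typing is forced at every node. Once this case closes, the induction is complete.
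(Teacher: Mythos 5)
Your proposal is correct and follows essentially the same route as the paper: induction on the structure of $t$, with the propositional cases deferred to de Groote and all of the work placed in the $\mathsf{B}$-case, which the paper discharges by precisely the inside-out typing derivation you describe (continuation $k:\,\sim\overline{B}$, each abstraction $\lambda x_{i}.(\cdots)$ of type $\sim\overline{A_{i}}$ fed to $\overline{t_{i}}$, conclusion $\lambda k.(\cdots):\,\sim\sim\overline{B}$). The one caveat --- which you inherit from the paper itself --- is that this derivation gives the translated term the type $\sim\sim\overline{B}$, so the induction only closes under the reading $\overline{\Box A}=\,\sim\sim\overline{A}$ (equivalently $(\Box A)^{\circ}=\overline{A}$), not under the paper's literal clause $(\Box A)^{\circ}=\,\sim\sim\overline{A}$, on which your stated target $\sim\sim(\Box B)^{\circ}$ would unfold to $\sim\sim\sim\sim\overline{B}$ and the type of $\overline{t_{i}}$ would no longer match that of the continuation $\lambda x_{i}.(\cdots)$ it is applied to.
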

\begin{proof}
By induction on the structure of $t$. We consider the case $t=\mathsf{B}_{x_{1},\cdots,x_{n}}(t_{1},\cdots,t_{n})\,\mathsf{in}\,s$:
\begin{prooftree}
\AxiomC{$\overline{\Gamma_{1}}\vdash \overline{t_{1}}:\,\sim\sim\overline{A_{1}}$}
\AxiomC{$\overline{\Gamma_{n}}\vdash \overline{t_{n}}:\,\sim\sim\overline{A_{n}}$}\AxiomC{$x_{1}:\overline{A_{1}},\cdots,x_{n}:\overline{A_{n}},\overline{\Delta}\vdash \overline{s}:\overline{B}$}\AxiomC{$k:\,\sim\overline{B}\vdash k:\,\sim\overline{B}$}\BinaryInfC{$x_{1}:\overline{A_{1}},\cdots,x_{n}:\overline{A_{n}},\overline{\Delta},k:\,\sim\overline{B}\vdash k\overline{s}:q$}\UnaryInfC{$x_{1}:\overline{A_{1}},\cdots,x_{n-1}:\overline{A_{n-1}},\overline{\Delta},k:\,\sim\overline{B}\vdash \lambda x_{n}.k\overline{s}:\,\sim\overline{A_{n}}$}\BinaryInfC{$\overline{\Gamma_{n}},x_{1}:\overline{A_{1}},\cdots,x_{n-1}:\overline{A_{n-1}},\overline{\Delta},k:\,\sim\overline{B}\vdash t_{n}(\lambda x_{n}.k\overline{s}):q$}\UnaryInfC{$\overline{\Gamma_{n}},x_{1}:\overline{A_{1}},\cdots,x_{n-2}:\overline{A_{n-2}},\overline{\Delta},k:\,\sim\overline{B}\vdash \lambda x_{n-1}. t_{n}(\lambda x_{n}.k\overline{s}):\,\sim\overline{A_{n-1}}$}\noLine\UnaryInfC{$\vdots$}\noLine\UnaryInfC{$\overline{\Gamma_{2}},\cdots\overline{\Gamma_{n}},\overline{\Delta},k:\,\sim\overline{B}\vdash (\lambda x_{1}.\cdots\overline{t_{n}}(\lambda x_{n}.k\overline{s})\cdots):\,\sim\overline{A_{1}}$}\BinaryInfC{$\overline{\Gamma_{1}},\cdots\overline{\Gamma_{n}},\overline{\Delta},k:\,\sim\overline{B}\vdash \overline{t_{1}}(\lambda x_{1}.\cdots\overline{t_{n}}(\lambda x_{n}.k\overline{s})\cdots):q$}\UnaryInfC{$\overline{\Gamma_{1}},\cdots\overline{\Gamma_{n}},\overline{\Delta}\vdash \lambda k.\overline{t_{1}}(\lambda x_{1}.\cdots\overline{t_{n}}(\lambda x_{n}.k\overline{s})\cdots):\,\sim\sim\overline{B}$}
\end{prooftree}
\end{proof}

However, for terms we need some further accommodations in order to avoid redexes:

\begin{definition}[Modified CPS-translation] The modified CPS-translation $\overline{\overline{t}}$ for any $\lambda^{\Box\rightarrow\wedge\vee}$-term is $$\overline{\overline{t}}\,=\,\lambda k.(M : k),$$
where $k$ is a fresh variable, and where the infix operator $:$ is defined by induction on the structure of $t$:
\begin{itemize}
\item $x:r=xr$
\item $\lambda x. t : r = r(\lambda x.\overline{\overline{t}})$
\item $ts : r = t : \lambda m. m\overline{\overline{s}}r$
\item $\langle t,s\rangle : r = r(\lambda u. u\overline{\overline{t}}\overline{\overline{s}})$
\item $\pi_{1}t : r = t : \lambda u. u(\lambda i.\lambda j. i r)$
\item $\pi_{2}t : r = t : \lambda u. u(\lambda i.\lambda j. j r)$
\item $\mathsf{in}_{1}t : r = r(\lambda i.\lambda j. i\overline{\overline{t}})$
\item $\mathsf{in}_{2}t : r = r(\lambda i.\lambda j. j\overline{\overline{t}})$
\item $\mathsf{C}_{x,y}(t,t_{1},t_{2}) : r = t : \lambda m.(\lambda x.(t_{1} : r))(\lambda y.(t_{2} : r))$
\item $\mathsf{B}_{x_{1},\cdots,x_{n}}(t_{1},\cdots,t_{n})\,\mathsf{in}\,s : r = t_{1} : (\lambda x_{1}.\cdots t_{n} : (\lambda x_{n}. r \overline{\overline{s}})\cdots)$
\end{itemize}

where $m,u,i,j$ are fresh variables and $x,y,x_{1},\cdots,x_{n}$ do not occur free in $r$.
\end{definition}

%We see that this modified translation does delete $\beta$-redexes from the previous CPS-translation.

\begin{lemma}\label{redex} Let $t,r$ be $\lambda^{\Box\rightarrow\wedge\vee}$-terms. Then
\begin{enumerate}
\item $\overline{t}\gg_{\beta\eta}\overline{\overline{t}}$
\item $\overline{t}r\gg_{\beta\eta} t : r$
\end{enumerate}
\end{lemma}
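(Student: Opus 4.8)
The plan is to prove both statements by a single structural induction on $t$, with part (1) obtained from part (2) \emph{at the same term}. The point is that for every shape of $t$ the ordinary translation is syntactically of the form $\overline{t}=\lambda k.\,B$, while by definition $\overline{\overline{t}}=\lambda k.(t:k)$. So I would set up each case of (2) to begin by contracting the defining outer redex, $\overline{t}\,r>_{\beta}B[k:=r]$, and then to show $B[k:=r]\gg_{\beta\eta}t:r$. Specializing $r$ to the bound variable $k$ gives $B\gg_{\beta\eta}t:k$, and re-abstracting (using that $\gg_{\beta\eta}$ is a congruence) yields $\overline{t}=\lambda k.B\gg_{\beta\eta}\lambda k.(t:k)=\overline{\overline{t}}$, i.e.\ (1). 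Thus the whole weight of the argument lies in (2): showing that feeding a continuation $r$ to $\overline{t}$ contracts all the administrative $\beta$-redexes and lands exactly on $t:r$.

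Before the main induction I would isolate an auxiliary monotonicity property of the infix operator: \emph{if $r\gg_{\beta\eta}r'$ then $t:r\gg_{\beta\eta}t:r'$}, proven by an easy induction on $t$ (in each clause $r$ occurs either in head/argument position, where congruence applies, or inside a continuation handed to a subterm, where the induction hypothesis applies). This is needed because in the inductive cases the continuation handed to a subterm still mentions the \emph{ordinary} translation $\overline{s}$ of an immediate subterm $s$, whereas the target $t:r$ mentions the \emph{modified} one $\overline{\overline{s}}$; monotonicity lets me replace $\overline{s}$ by $\overline{\overline{s}}$ inside the continuation after invoking (1) on $s$.

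For (2) I would then proceed case-wise on $t$. The cases for the propositional constructors ($x$, $\lambda x.t$, application, pairing, projections, injections, and the disjunction eliminator $\mathsf{C}$) are exactly those of \cite[\S4]{degroote}: one applies (2) to the head subterm and (1) to the argument subterms, gluing them with the monotonicity lemma and congruence of $\gg_{\beta\eta}$. For instance, in the application case $\overline{t_1t_2}\,r>_{\beta}\overline{t_1}(\lambda m.m\,\overline{t_2}\,r)\gg_{\beta\eta}t_1:(\lambda m.m\,\overline{t_2}\,r)$ by (2) on $t_1$, and then (1) on $t_2$ together with monotonicity rewrites this to $t_1:(\lambda m.m\,\overline{\overline{t_2}}\,r)=t_1t_2:r$.

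The only genuinely new case, and the one I expect to be the main obstacle, is the modal constructor $t=\mathsf{B}_{\vec x}(\vec t)\,\mathsf{in}\,s$, because its translation is a nested tower of continuations of unbounded depth $n$,
$$\overline{\mathsf{B}_{\vec x}(\vec t)\,\mathsf{in}\,s}\;r\;>_{\beta}\;\overline{t_1}\bigl(\lambda x_1.\cdots\overline{t_n}(\lambda x_n.\,r\,\overline{s})\cdots\bigr),$$
and I must show this reduces to $t_1:(\lambda x_1.\cdots t_n:(\lambda x_n.\,r\,\overline{\overline{s}})\cdots)$. I would handle it by an inner induction on the number $n$ of boxed premises, peeling off one premise at a time: applying (2) to $t_1$ turns the outermost $\overline{t_1}(\cdots)$ into $t_1:(\cdots)$, then monotonicity pushes the reduction into the tail continuation, where the same move is repeated for $t_2,\dots,t_n$; finally (1) on $s$ together with monotonicity replaces the innermost $\overline{s}$ by $\overline{\overline{s}}$. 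The delicate point is purely the bookkeeping of the nested continuations — respecting the freshness side-conditions that $x_1,\dots,x_n$ do not occur free in $r$, and applying monotonicity at the correct depth — but no new rewriting phenomenon arises beyond what the propositional cases already exhibit.
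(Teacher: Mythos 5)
Your proposal is correct and takes essentially the same route as the paper, whose proof is precisely a simultaneous structural induction on $t$ proving properties (1) and (2) together (the paper leaves the case analysis implicit, deferring the non-modal cases to de Groote and handling the modal constructor as you do). The auxiliary monotonicity property of the infix operator that you isolate is not a departure either: the paper states it as the lemma immediately following this one, so your argument just makes explicit the bookkeeping the paper's one-line proof suppresses.
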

\begin{proof}
By simultaneous induction for property 1 and property 2 on the structure of $t$.
\end{proof}

Moreover, typing is respected by the modified translation, as expected.
\begin{proposition}\label{typing2} Let $t$ be a $\lambda^{\Box\rightarrow\wedge\vee}$-term with type $A$ in context $\Gamma$. Then $\overline{\overline{t}}$ is a term of simple type theory with type $\overline{A}$ in context $\overline{\Gamma}$.
\end{proposition}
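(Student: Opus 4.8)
The plan is to obtain this typing statement almost for free from the two results already in hand, rather than re-running the induction of Proposition~\ref{typing} for the modified translation. The key observation is that $\overline{\overline{t}}$ is nothing but a $\beta\eta$-reduct of the plain translation $\overline{t}$: by Lemma~\ref{redex}(1) we already have $\overline{t}\gg_{\beta\eta}\overline{\overline{t}}$. Since the target calculus is simple type theory, which enjoys \emph{subject reduction} (typing is preserved along $\beta\eta$-reduction), it suffices to know that $\overline{t}$ is well typed and then to read off the type of its reduct.

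Concretely, I would argue as follows. Assume $\Gamma\vdash t:A$ in $\lambda^{\Box\rightarrow\wedge\vee}$. By Proposition~\ref{typing}, the plain CPS-translation is well typed with the same type and the translated context, i.e.\ $\overline{\Gamma}\vdash\overline{t}:\overline{A}$ in simple type theory. By Lemma~\ref{redex}(1), $\overline{t}\gg_{\beta\eta}\overline{\overline{t}}$. Applying subject reduction along this reduction sequence yields $\overline{\Gamma}\vdash\overline{\overline{t}}:\overline{A}$, which is exactly the claim. No fresh induction on $t$ is needed at this stage, because all the structural work has already been absorbed into Proposition~\ref{typing} and Lemma~\ref{redex}.

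Since there is no genuinely hard step, the only point demanding care is the invocation of subject reduction for the \emph{full} $\beta\eta$-reduction and not merely for $\beta$: the definition of $\overline{\overline{t}}$ is designed precisely to delete the administrative $\beta$-redexes of $\overline{t}$, but Lemma~\ref{redex} is stated for $\gg_{\beta\eta}$, so one must make sure that $\eta$-steps too preserve typing in the chosen presentation of simple type theory. This is standard, so the appeal is legitimate. Should one prefer a self-contained argument avoiding subject reduction, the alternative is a direct induction on the structure of $t$ that simultaneously establishes a typing clause for the infix operator, namely that whenever $\Gamma\vdash t:A$ and the continuation $r$ has the appropriate negative type $\sim A^{\circ}$ in an extended context, then $t:r$ has type $q$. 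Feeding $r:=k$ with $k$ a fresh variable of type $\sim A^{\circ}$ and abstracting gives $\overline{\overline{t}}=\lambda k.(t:k)$ of type $\sim\sim A^{\circ}=\overline{A}$, the modal clause for $\mathsf{B}$ reproducing, step by step, the derivation already displayed in the proof of Proposition~\ref{typing}.
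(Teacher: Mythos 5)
Your proposal is correct and follows exactly the paper's own route: the paper proves this proposition precisely by combining Proposition~\ref{typing} with Lemma~\ref{redex}, the appeal to subject reduction for $\beta\eta$ in simple type theory being left implicit there and made explicit (and rightly checked for the $\eta$-steps) in your write-up.
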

\begin{proof}
By proposition \ref{typing}, and lemma \ref{redex}.
\end{proof}

The next lemmas show that we can simulate detours by $\beta\eta$-reductions.

\begin{lemma}\label{lem13} For $\lambda^{\Box\rightarrow\wedge\vee}$-terms $t,s$ and any term $r$ of simple type theory that has no free occurrences of $x$, the following hold:
\begin{enumerate}
\item $(t : r)[x:=\overline{\overline{s}}]\gg_{\beta\eta}\,(t[x:=s]) : r$
\item $\overline{\overline{t}}[x:=\overline{\overline{s}}]\gg_{\beta\eta}\overline{\overline{t[x:=s]}}.$
\end{enumerate}
\end{lemma}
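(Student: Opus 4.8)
The plan is to prove both statements simultaneously by induction on the structure of $t$, exactly as the translations $\overline{\overline{t}}$ and the infix operator $(\,\cdot : \cdot\,)$ are themselves defined by mutual/simultaneous recursion. Since $\overline{\overline{t}} = \lambda k.(t : k)$ with $k$ fresh, property~2 is essentially the engine and property~1 follows from it: for the second claim I would compute $\overline{\overline{t}}[x:=\overline{\overline{s}}] = \lambda k.((t : k)[x:=\overline{\overline{s}}])$, since $k$ is fresh and may be chosen distinct from $x$ and from the free variables of $\overline{\overline{s}}$; then the inductive hypothesis for property~1 gives $(t : k)[x:=\overline{\overline{s}}] \gg_{\beta\eta} (t[x:=s]) : k$, and wrapping in $\lambda k$ yields $\lambda k.((t[x:=s]) : k) = \overline{\overline{t[x:=s]}}$ as required. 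So the real content is property~1, and property~2 is a one-line corollary at each stage.

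For property~1 I would go case by case on the outermost constructor of $t$, in each case using the defining clause of $(\,\cdot : r\,)$, pushing the substitution $[x:=\overline{\overline{s}}]$ inward, and invoking the induction hypotheses on the immediate subterms. The base case $t = x$ is the only place where the substitution genuinely fires: $(x : r)[x:=\overline{\overline{s}}] = (xr)[x:=\overline{\overline{s}}] = \overline{\overline{s}}\,r = (\lambda k.(s:k))\,r \gg_{\beta} (s : r)$ by a single $\beta$-step, using that $r$ has no free occurrence of $x$ so that $r[x:=\overline{\overline{s}}] = r$; and here $x[x:=s] = s$, matching the right-hand side. The case $t = y$ for a variable $y \neq x$ is immediate since both sides equal $yr$. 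The propositional cases ($\lambda$, application, pairing, projections, injections, case-analysis $\mathsf{C}$) are precisely those treated in~\cite[§4]{degroote}, and they go through by applying the IH to each subterm and relying on the variable-freshness side conditions (the bound variables $x,y,x_{1},\dots,x_{n}$ do not occur free in $r$, and $r$ carries no free $x$) so that substitution commutes with the translation clauses without capture.

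The genuinely new case, and the one I expect to be the main obstacle, is the modal constructor $t = \mathsf{B}_{x_{1},\cdots,x_{n}}(t_{1},\cdots,t_{n})\,\mathsf{in}\,s$. Its clause reads $t : r = t_{1} : (\lambda x_{1}.\cdots t_{n} : (\lambda x_{n}.\,r\,\overline{\overline{s}})\cdots)$, a deeply nested continuation in which $r$ appears only at the innermost position guarded by the binders $x_{1},\dots,x_{n}$. Applying $[x:=\overline{\overline{s'}}]$ (writing $s'$ for the substituted term to avoid clashing with the body $s$ of the $\mathsf{B}$-term) must be propagated through every $t_{i}$ and through the body $s$; the delicate point is that each continuation $\lambda x_{i}.(\cdots)$ is not a $\lambda^{\Box\rightarrow\wedge\vee}$-term but a simple-type-theory term built from translated pieces, so I cannot directly appeal to the IH for property~1 on it as a whole. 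Instead I would treat the nested continuation as an \emph{outer} simple-type context $r'$ for the head $t_{1}$: since $x$ does not occur free in the continuation modulo the embedded translated subterms, I apply the IH for property~1 to $t_{1}$ against this $r'$, then peel the result and recurse on $t_{2},\dots,t_{n}$ in turn, finally applying the IH for property~2 to the body $s$ so that $\overline{\overline{s}}[x:=\overline{\overline{s'}}] \gg_{\beta\eta}\overline{\overline{s[x:=s']}}$. The bookkeeping obstacle is verifying that the freshness conventions ($x_{1},\dots,x_{n}$ not free in $r$, and these bound variables distinct from $x$ and from the free variables of $\overline{\overline{s'}}$) are preserved at every peeling step so that no capture occurs and each substituted continuation still has no free $x$ at the point where the next IH is invoked; once that invariant is maintained, the chain of $\beta\eta$-reductions assembles to give $(t : r)[x:=\overline{\overline{s'}}] \gg_{\beta\eta} (t[x:=s']) : r$, closing the induction.
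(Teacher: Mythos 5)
Your skeleton---property 1 by induction on the structure of $t$, with property 2 read off from property 1 via $\overline{\overline{t}}=\lambda k.(t:k)$ for fresh $k$---is the same as the paper's (the paper gives no further detail than this). Your variable case is also essentially right, though even there a single $\beta$-step on $(\lambda k.(s:k))\,r$ yields $(s:k)[k:=r]$, and identifying that term with $s:r$ silently uses an auxiliary fact, namely that substitution for a variable not occurring in $t$ passes through the colon operator, $(t:K)[z:=R]=t:(K[z:=R])$; this needs its own (easy) induction.

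The genuine gap is in the inductive step for the constructors that embed subterms into the continuation: application, $\mathsf{C}$, and the modal $\mathsf{B}$, which you treat explicitly. Writing the modal term as $\mathsf{B}_{x_1,\ldots,x_n}(t_1,\ldots,t_n)\,\mathsf{in}\,u$ (to keep $s$ for the substituted term), the continuation handed to the head is $r'=\lambda x_1.(t_2:(\cdots(\lambda x_n.r\,\overline{\overline{u}})\cdots))$, and it contains the translated material of $t_2,\ldots,t_n$ and $\overline{\overline{u}}$, which in general \emph{do} contain free occurrences of $x$. Property 1 is stated only for continuations with no free occurrence of $x$, so the step ``apply the IH for property 1 to $t_1$ against this $r'$'' invokes the induction hypothesis outside its hypotheses; your clause ``since $x$ does not occur free in the continuation modulo the embedded translated subterms'' concedes exactly the point at issue rather than resolving it (and the same issue is already present in the application and $\mathsf{C}$ cases you defer to de Groote). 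There are two standard repairs: either strengthen the statement you induct on to arbitrary continuations, proving $(t:K)[x:=\overline{\overline{s}}]\gg_{\beta\eta}(t[x:=s]):(K[x:=\overline{\overline{s}}])$, of which the lemma is the special case where $x$ is not free in $K$; or replace the embedded translated pieces of $r'$ by fresh variables, so the continuation becomes genuinely $x$-free, apply the IH, and substitute back, using stability of $\gg_{\beta\eta}$ under substitution. In either repair, to land on $(t[x:=s]):r$ you must still rewrite $\overline{\overline{u}}[x:=\overline{\overline{s}}]$ into $\overline{\overline{u[x:=s]}}$ (property 2 of the IH) \emph{inside} a colon-translation; since $t:K$ is not a one-hole context in $K$ (the $\mathsf{C}$-clause duplicates $K$), this requires the monotonicity property that $K\gg_{\beta\eta}K'$ implies $t:K\gg_{\beta\eta}t:K'$---which is precisely the lemma the paper states immediately \emph{after} this one, and which is independent of it, so it must be proven first or simultaneously. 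Without the strengthened induction hypothesis (or the fresh-variable device) together with this monotonicity, your peeling argument does not close the induction.
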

\begin{proof}
Property 1 is proven by induction on the structure of $t$. Property 2 then follows.
\end{proof}

\begin{lemma} For any terms $r,s$ of simple type theory such that $r\overset{\mathsmaller{+}}{>}_{\beta\eta}s$, we have $t : r\,\overset{\mathsmaller{+}}{>}_{\beta\eta}\, t : s$ for any $\lambda^{\Box\rightarrow\wedge\vee}$-term $t$.
\end{lemma}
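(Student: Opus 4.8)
The plan is to argue by \textbf{structural induction on} the $\lambda^{\Box\rightarrow\wedge\vee}$-term $t$, reading the statement as universally quantified in the continuation arguments, so that the induction hypothesis becomes available for every proper subterm of $t$ and for \emph{every} pair $r',s'$ of simple-type terms with $r'\overset{\mathsmaller{+}}{>}_{\beta\eta}s'$. The single external ingredient is the standard congruence property of $\beta\eta$-reduction: since $>_{\beta\eta}$ is closed under the term-forming operations, so is its transitive closure, whence $a\overset{\mathsmaller{+}}{>}_{\beta\eta}b$ implies $C[a]\overset{\mathsmaller{+}}{>}_{\beta\eta}C[b]$ for every one-hole context $C$. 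Inspecting the defining clauses of the infix operator $:$, the continuation $r$ always occurs \emph{free and in applicative position}, never bound and never substituted into -- this is exactly what the side condition ``$x,y,x_{1},\dots,x_{n}$ do not occur free in $r$'' secures -- so any reduction performed on $r$ propagates cleanly through the surrounding term.

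First I would clear the clauses in which $r$ already sits in the body of $t:r$ outside every recursive call of $:$, namely $x:r=xr$, $\lambda x.t:r=r(\lambda x.\overline{\overline{t}})$, $\langle t_{1},t_{2}\rangle:r=r(\lambda u.u\overline{\overline{t_{1}}}\,\overline{\overline{t_{2}}})$ and $\mathsf{in}_{i}t:r=r(\lambda i.\lambda j.\,i\,\overline{\overline{t}})$: here $t:s$ arises from $t:r$ merely by rewriting the displayed occurrence of $r$ to $s$, so congruence gives $t:r\overset{\mathsmaller{+}}{>}_{\beta\eta}t:s$ with no appeal to the induction hypothesis. Next come the clauses in which $r$ is folded into a single new continuation and handed to a smaller head subterm, $t_{1}t_{2}:r=t_{1}:(\lambda m.m\overline{\overline{t_{2}}}r)$ and $\pi_{i}t:r=t:(\lambda u.u(\lambda i.\lambda j.\,i\,r))$: writing $r'$ and $s'$ for the two continuations built from $r$ and $s$, congruence gives $r'\overset{\mathsmaller{+}}{>}_{\beta\eta}s'$, and the induction hypothesis applied to the head subterm ($t_{1}$, resp.\ $t$) then yields $t_{1}:r'\overset{\mathsmaller{+}}{>}_{\beta\eta}t_{1}:s'$, i.e.\ the required reduction.

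The genuine work -- and the step I expect to be the main obstacle -- lies in the two clauses where $r$ occurs \emph{underneath recursive applications} of $:$, since there the induction hypothesis must be invoked on subterms and the outcomes recombined by congruence. For $\mathsf{C}_{x,y}(t_{0},t_{1},t_{2}):r=t_{0}:(\lambda m.(\lambda x.(t_{1}:r))(\lambda y.(t_{2}:r)))$ I apply the induction hypothesis to $t_{1}$ and to $t_{2}$, obtaining $t_{1}:r\overset{\mathsmaller{+}}{>}_{\beta\eta}t_{1}:s$ and $t_{2}:r\overset{\mathsmaller{+}}{>}_{\beta\eta}t_{2}:s$; congruence lifts these to a reduction of the whole continuation, and a final use of the induction hypothesis on the head $t_{0}$ closes the case. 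The clause $\mathsf{B}_{x_{1},\dots,x_{n}}(t_{1},\dots,t_{n})\,\mathsf{in}\,u:r=t_{1}:(\lambda x_{1}.\cdots t_{n}:(\lambda x_{n}.\,r\,\overline{\overline{u}})\cdots)$ I treat by a secondary descending induction on the nesting: innermost, $r$ occurs applicatively in $r\,\overline{\overline{u}}$, so $\lambda x_{n}.\,r\,\overline{\overline{u}}\overset{\mathsmaller{+}}{>}_{\beta\eta}\lambda x_{n}.\,s\,\overline{\overline{u}}$ by congruence; then, level by level, the induction hypothesis on $t_{k}$ converts a reduction of the continuation at level $k$ into a reduction of $t_{k}:(\cdots)$, which, placed under $\lambda x_{k-1}$, is itself a reduction of the continuation at level $k-1$. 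Iterating up to $t_{1}$ delivers the claim. The only delicate point is the bookkeeping in threading the single reduction of $r$ up the whole chain of continuations while checking that each stage really is a $\overset{\mathsmaller{+}}{>}_{\beta\eta}$-step and that no $x_{k}$ is captured; conceptually, however, the lemma is a routine congruence statement.
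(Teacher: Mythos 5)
Your proposal is correct and follows exactly the paper's route: the paper proves this lemma ``by straightforward induction on the structure of $t$'', which is precisely your structural induction with the statement quantified over all continuation pairs, the easy cases discharged by congruence of $\beta\eta$-reduction and the $\mathsf{C}$- and $\mathsf{B}$-clauses handled by invoking the induction hypothesis on subterms and recombining. Your write-up simply makes explicit the case analysis the paper leaves implicit.
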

\begin{proof}
By straightforward induction on the structure of $t$.
\end{proof}

\begin{lemma}\label{maind} Let $t,s$ be two $\lambda^{\Box\rightarrow\wedge\vee}$-terms such that $t>_{D}s$. Then:
\begin{enumerate}
\item $t : r\overset{\mathsmaller{+}}{>}_{\beta\eta} s : r$ for any term $r$ of simple type theory,
\item $\overline{\overline{t}}\overset{\mathsmaller{+}}{>}_{\beta\eta}\overline{\overline{s}}$.
\end{enumerate}
\end{lemma}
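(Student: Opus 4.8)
The plan is to establish statement~1 by induction on the structure of $t$ and to read off statement~2 as a corollary: since $\overline{\overline{t}}=\lambda k.(t:k)$ and $\overline{\overline{s}}=\lambda k.(s:k)$, instantiating~1 with the fresh variable $k$ and abstracting — using that $\overset{\mathsmaller{+}}{>}_{\beta\eta}$ is a congruence — yields $\overline{\overline{t}}\overset{\mathsmaller{+}}{>}_{\beta\eta}\overline{\overline{s}}$ directly; conversely, in the compatible-closure cases of the induction I will freely use statement~2 for immediate subterms, which is licit as it follows from statement~1 on strictly smaller terms. For the case analysis on $t>_{D}s$, the head redexes for the traditional connectives (rewritings 1--3) together with their compatible closures are handled exactly as in~\cite{degroote}, so I would only spell out the two modal head redexes and the compatible-closure step through $\mathsf{B}$, leaning on lemma~\ref{lem13} to trade object-level for meta-level substitution and on the preceding monotonicity lemma to push reductions occurring inside a continuation.

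For rewriting~5, with $t=\mathsf{B}_{x}\,u\;\mathsf{in}\;x$ and $s=u$, unfolding the infix operator gives $t:r=u:(\lambda x.\,r\,\overline{\overline{x}})$; since $\overline{\overline{x}}=\lambda k.xk\gg_{\eta}x$ and $x$ is not free in $r$, the continuation satisfies $\lambda x.\,r\,\overline{\overline{x}}\overset{\mathsmaller{+}}{>}_{\beta\eta}r$, so the monotonicity lemma gives $t:r\overset{\mathsmaller{+}}{>}_{\beta\eta}u:r=s:r$. For rewriting~4, writing the redex as $\mathsf{B}_{x_{1},\dots,x_{n}}(\dots,(\mathsf{B}_{\vec{y}}(\vec{s})\;\mathsf{in}\;t_{i}),\dots)\;\mathsf{in}\;w$, I would unfold the operator twice so that the inner box runs against the continuation $\lambda x_{i}.\,K$, where $K$ is the nest $t_{i+1}:(\lambda x_{i+1}.\cdots t_{n}:(\lambda x_{n}.\,r\,\overline{\overline{w}})\cdots)$; contracting the $\beta$-redex $(\lambda x_{i}.\,K)\,\overline{\overline{t_{i}}}$ and then invoking lemma~\ref{lem13} to drive the substitution $[x_{i}:=\overline{\overline{t_{i}}}]$ down to $\overline{\overline{w}}$ should reassemble precisely the continuation nest defining $s:r$. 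Finally, the compatible-closure step for $\mathsf{B}$ splits according to whether the contracted redex lies in a premise $t_{j}$ (use the induction hypothesis for~1 on $t_{j}:D$) or in the body $w$ (then $\overline{\overline{w}}\overset{\mathsmaller{+}}{>}_{\beta\eta}\overline{\overline{w'}}$ by the induction hypothesis), and in both cases the reduction is propagated outward through the surrounding abstractions $\lambda x_{1}.\cdots\lambda x_{n}$ by repeated $\lambda$-congruence and the monotonicity lemma.

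The main obstacle I anticipate is the bookkeeping in rewriting~4. One must verify that $x_{i}$ occurs free only within $\overline{\overline{w}}$ — which holds because the variables $\vec{x}$ bind solely in the body of the box, so the premises $t_{i+1},\dots,t_{n}$ are untouched — in order that lemma~\ref{lem13} legitimately commute the substitution $[x_{i}:=\overline{\overline{t_{i}}}]$ past those premises and their continuations and replace $\overline{\overline{w}}$ by $\overline{\overline{w[x_{i}:=t_{i}]}}$, while $\vec{y}$ and $\vec{s}$ end up spliced in place of $x_{i}$ and $t_{i}$ exactly as prescribed by the contractum $s$. Establishing that the two nested continuation shapes coincide after the single $\beta$-contraction, rather than merely converge, is the delicate point; everything else is routine manipulation of continuations already licensed by lemmas~\ref{redex} and~\ref{lem13} and by the monotonicity lemma.
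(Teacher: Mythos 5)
Your proposal is correct and takes essentially the same route as the paper's proof: property 2 is read off from property 1, the non-modal cases are delegated to \cite{degroote}, rewriting 5 is handled by $\eta$-reducing $\overline{\overline{x}}=\lambda k.xk$ inside the continuation and propagating via the monotonicity lemma, and rewriting 4 by a single $\beta$-contraction followed by lemma~\ref{lem13} to trade the substitution $[x_{i}:=\overline{\overline{t_{i}}}]$ for $\overline{\overline{w[x_{i}:=t_{i}]}}$. The only difference is explicitness: the paper verifies rewriting 4 on the representative instance $\mathsf{B}_{x}(\mathsf{B}_{y}\,t\;\mathsf{in}\;s_{1})\;\mathsf{in}\;s$ and leaves the $n$-ary case and the compatible-closure steps through $\mathsf{B}$ as easy generalizations, whereas you spell these out (correctly noting that the variable bookkeeping there is the one delicate point).
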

\begin{proof}
Property 2 follows from property 1, which is proven by distinguishing cases of $>_{D}$. By~\cite[Lemma 15]{degroote}, we can only consider the modal rewritings:
\begin{itemize}
\item $(\mathsf{B}_{x} t\,\mathsf{in}\, x) : r = t : \lambda x. r\overline{\overline{x}} = t : \lambda x. r(\lambda h. x h)\overset{\mathsmaller{+}}{>}_{\beta\eta} t : r.$
\item To improve readability, we consider the term $\mathsf{B}_{x}(\mathsf{B}_{y} t\,\mathsf{in}\, s_{1})\,\mathsf{in}\, s$, since the case with multiple variables and subterms is an easy generalization:

$(\mathsf{B}_{x}(\mathsf{B}_{y} t\,\mathsf{in}\, s_{1})\,\mathsf{in}\, s) : r = (\mathsf{B}_{y} t\,\mathsf{in}\, s_{1}) : \lambda x. r\overline{\overline{s}} = t : \lambda y.(\lambda x. r \overline{\overline{s}})\overline{\overline{s_{1}}}>_{\beta} t : \lambda y. r\overline{\overline{s}}[x:=\overline{\overline{s_{1}}}]\underset{\textrm{lem.}\ref{lem13}}{\gg_{\beta\eta}}\\ t : \lambda y. k\overline{\overline{s[x:=s_{1}]}}\,=\,(\mathsf{B}_{y}. t\,\mathsf{in}\,s[x:=s_{1}]) : r$

as required.

\end{itemize}
\end{proof}

Normalization w.r.t. detours is now an easy consequence of the previous lemma:

\begin{proposition}\label{normdet}$\lambda^{\Box\rightarrow\wedge\vee}$ is strongly normalizing w.r.t.~$>_{D}$-reductions.
\end{proposition}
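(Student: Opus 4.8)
The plan is to transfer strong normalization from simple type theory back to $\lambda^{\Box\rightarrow\wedge\vee}$ along the modified CPS-translation, exploiting the fact that the translation strictly simulates detour reductions. The whole argument is an instance of the standard reducibility-by-simulation technique: once one has an embedding into a strongly normalizing target that turns each source step into a strictly positive number of target steps, the source must be strongly normalizing as well.

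Concretely, I would argue by contradiction. Suppose $\lambda^{\Box\rightarrow\wedge\vee}$ were \emph{not} strongly normalizing w.r.t.~$>_{D}$; then there would exist an infinite reduction sequence
\begin{equation*}
t_{0}>_{D}t_{1}>_{D}t_{2}>_{D}\cdots.
\end{equation*}
Applying property~2 of Lemma~\ref{maind} to each step yields
\begin{equation*}
\overline{\overline{t_{0}}}\,\overset{\mathsmaller{+}}{>}_{\beta\eta}\,\overline{\overline{t_{1}}}\,\overset{\mathsmaller{+}}{>}_{\beta\eta}\,\overline{\overline{t_{2}}}\,\overset{\mathsmaller{+}}{>}_{\beta\eta}\cdots,
\end{equation*}
and since $\overset{\mathsmaller{+}}{>}_{\beta\eta}$ denotes a \emph{strictly positive} number of $\beta\eta$-steps, concatenating these finite nonempty segments produces a genuinely infinite $\beta\eta$-reduction sequence in simple type theory. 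This is where the form of Lemma~\ref{maind} is essential: had the simulation only guaranteed $\gg_{\beta\eta}$ (possibly zero steps), the concatenation could stall and the contradiction would fail; the transitive-closure version is exactly what rules this out.

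To close the argument I would invoke Proposition~\ref{typing2}, which guarantees that every $\overline{\overline{t_{i}}}$ is a well-typed term of simple type theory, so that the infinite sequence above lives entirely inside the simply typed calculus with $\rightarrow$ as its only constructor. That calculus is strongly normalizing w.r.t.~$\beta\eta$-reduction by the classical result for $\lambda^{\rightarrow}$ -- and, as emphasized earlier in the paper, this is provable by elementary (arithmetizable, first-order) means, which keeps the overall metatheory low. The existence of an infinite $\beta\eta$-reduction therefore contradicts strong normalization of simple type theory, and the assumed infinite $>_{D}$-sequence cannot exist. Hence $\lambda^{\Box\rightarrow\wedge\vee}$ is strongly normalizing w.r.t.~$>_{D}$.

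I do not expect a serious obstacle at this stage: all the substantive work -- preservation of typing (Propositions~\ref{typing}, \ref{typing2}), the substitution and monotonicity lemmas, and above all the strict simulation of detour steps in Lemma~\ref{maind} -- has already been carried out. The only point requiring care is the bookkeeping just described, namely ensuring that the per-step strictness of the simulation genuinely produces an infinite target reduction; everything else reduces to citing strong normalization of $\lambda^{\rightarrow}$.
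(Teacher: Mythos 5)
Your proof is correct and follows essentially the same route as the paper's: both transfer strong normalization from simple type theory along the modified CPS-translation, using the strict simulation $\overline{\overline{t}}\overset{\mathsmaller{+}}{>}_{\beta\eta}\overline{\overline{s}}$ of Lemma~\ref{maind}. Your version merely makes explicit the bookkeeping (concatenation of nonempty reduction segments, well-typedness via Proposition~\ref{typing2}) that the paper's one-line proof leaves implicit.
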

\begin{proof}
By lemma \ref{maind}, every detour elimination from a $\lambda^{\Box\rightarrow\wedge\vee}$-term corresponds to a \mbox{$\beta\eta$-reduction} in simple type theory, which is strongly normalizing~\cite{sorensen}.
\end{proof}

At this point we have to combine the previous results in order to obtain normalization for both permutations and detours. That is, basically, the reason for the next lemma.

\begin{lemma}\label{lemma19} For any $\lambda^{\Box\rightarrow\wedge\vee}$-terms $t,s$ such that $t>_{P}s$, we have the following:
\begin{itemize}
\item $t : r = s : r$ for any term $r$ of simple type theory,
\item $\overline{\overline{t}}=\overline{\overline{s}}$.
\end{itemize}
\end{lemma}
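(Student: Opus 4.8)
The plan is to deduce the second identity from the first. By definition of the modified translation $\overline{\overline{t}}=\lambda k.(t:k)$, so once we know that $t:r=s:r$ holds for \emph{every} simple-type term $r$, we instantiate $r:=k$ and abstract, obtaining $\overline{\overline{t}}=\lambda k.(t:k)=\lambda k.(s:k)=\overline{\overline{s}}$. Everything therefore reduces to the first clause, which I would establish exactly as in Lemma~\ref{maind}, i.e.\ by distinguishing the possible shapes of the one-step reduction $t>_{P}s$. Because the infix operator $:$ is given by a single compositional clause per constructor, a permutation firing inside a proper subterm is absorbed for free: replacing a subterm by one with the same $:$-image (and the same $\overline{\overline{\cdot}}$-image) rebuilds the same translation, so the contextual-closure cases follow by a routine induction on the surrounding term, and only the four root permutations of Definition~\ref{permut} remain to be checked.

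For the three non-modal permutations — pushing an application or a projection into the branches of a $\mathsf{C}$-term, and the $\mathsf{C}$-over-$\mathsf{C}$ commutation — the verification is that of \cite{degroote}, which I would cite. The mechanism there, which also drives the modal case, is that the clause $\mathsf{C}_{x,y}(t,t_{1},t_{2}):r=t:\lambda m.m(\lambda x.(t_{1}:r))(\lambda y.(t_{2}:r))$ evaluates the scrutinee $t$ first and then hands \emph{one and the same} continuation $r$ into both branches; distributing an eliminator over the branches only distributes that continuation and leaves the continuation-passing term literally unchanged, which is precisely why a $>_{P}$ step produces an equality rather than the genuine $\overset{\mathsmaller{+}}{>}_{\beta\eta}$ step of the detour case in Lemma~\ref{maind}.

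The remaining, and genuinely new, case is the $\Box,\vee$-permutation. Here I would unfold $\mathsf{B}_{x_{1},\cdots,x_{n}}(t_{1},\cdots,\mathsf{C}_{x,y}(t,s_{1},s_{2}),\cdots,t_{n})\,\mathsf{in}\,s:r$ first by the clause for $\mathsf{B}$ and then by the clause for $\mathsf{C}$, isolating the continuation $\mathit{Cont}=\lambda x_{i}.t_{i+1}:(\lambda x_{i+1}.\cdots t_{n}:(\lambda x_{n}.r\,\overline{\overline{s}})\cdots)$ that the box assembles from the later premises, from $\overline{\overline{s}}$ and from $r$; the scrutinee $t$ is then run against $\lambda m.m(\lambda x.(s_{1}:\mathit{Cont}))(\lambda y.(s_{2}:\mathit{Cont}))$. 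On the other side I would unfold $\mathsf{C}_{x,y}(t,\mathsf{B}(\cdots,s_{1},\cdots)\,\mathsf{in}\,s,\mathsf{B}(\cdots,s_{2},\cdots)\,\mathsf{in}\,s):r$: the scrutinee $t$ is again run first, and each boxed branch rebuilds precisely $s_{k}:\mathit{Cont}$. For the case placed in the first premise ($i=1$) the two unfoldings are then literally the same term, since $t$ heads both and the single $\mathit{Cont}$ is copied verbatim into each branch on either side.

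I expect the main obstacle to be the general position $i>1$, and it is essentially bookkeeping of evaluation order. On the left the preceding premises $t_{1},\cdots,t_{i-1}$ are threaded \emph{before} the scrutinee $t$, whereas on the right — after the case has been pulled out and the box duplicated — they are threaded \emph{inside} each of the two branches. Reconciling the two requires checking that the nested continuation built from $t_{1},\cdots,t_{i-1}$, $\overline{\overline{s}}$ and $r$ reassembles identically, and this is exactly where I would use the side conditions of the translation: that $x,y,x_{1},\cdots,x_{n}$ do not occur free in $r$, and that the scrutinee and the remaining premises do not capture the box-bound variables, so that the abstractions and substitutions line up and the preceding premises may be matched across the two sides. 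Concretely I would settle the single-premise instance $\mathsf{B}_{x}(\mathsf{C}_{x,y}(t,s_{1},s_{2}))\,\mathsf{in}\,s$, where the identity is transparent, and then lift it to arbitrary $n$ and position $i$ by the same unfolding, treating the index manipulation as the one point demanding care.
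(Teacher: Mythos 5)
Your route is, up to the last paragraph, exactly the paper's: clause 2 is derived from clause 1 by abstracting the continuation, the three non-modal root permutations are delegated to de Groote, the congruence cases are dispatched by compositionality of the translation (the paper leaves these implicit), and the modal permutation is verified by unfolding the translation on the single-premise instance, where both sides become $t:\lambda m.m(\lambda x.(s_{1}:\lambda z.r\,\overline{\overline{s}}))(\lambda y.(s_{2}:\lambda z.r\,\overline{\overline{s}}))$ up to $\alpha$-renaming. Up to there the proposal is fine and matches the paper's own proof.

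The gap is in your treatment of the general position $i>1$: this case is \emph{not} bookkeeping, and the reconciliation you describe does not exist. The clause for $\mathsf{B}$ threads the premises left to right, so before the permutation the translation runs $t_{1}$ first and the case scrutinee $t$ only in $i$-th position, whereas after the permutation it runs $t$ first; this produces terms with \emph{different head variables}, which no freshness or capture-avoidance condition can repair. Concretely, take $n=2$, $i=2$ and let every constituent be a distinct variable. Then
\[
\mathsf{B}_{x_{1},x_{2}}(z_{1},\mathsf{C}_{x,y}(z,a,b))\,\mathsf{in}\,c \;:\; r \;=\; z_{1}\bigl(\lambda x_{1}.\,z\bigl(\lambda m.\,m\,(\lambda x.\,a(\lambda x_{2}.\,r\,\overline{\overline{c}}))\,(\lambda y.\,b(\lambda x_{2}.\,r\,\overline{\overline{c}}))\bigr)\bigr),
\]
while its $>_{P}$-reduct translates to
\[
z\bigl(\lambda m.\,m\,(\lambda x.\,z_{1}(\lambda x_{1}.\,a(\lambda x_{2}.\,r\,\overline{\overline{c}})))\,(\lambda y.\,z_{1}(\lambda x_{1}.\,b(\lambda x_{2}.\,r\,\overline{\overline{c}})))\bigr).
\]
These two terms are $\beta$-normal (apart from the $\eta$-redexes inside $\overline{\overline{c}}$, which they share), and they have heads $z_{1}$ and $z$ respectively, so they are not $\alpha$-equal and not even $\beta\eta$-interconvertible: the first clause of the lemma fails outright for $i>1$ under the given translation. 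To be fair, the paper's own proof has the very same hole --- it computes only the single-premise case and asserts that the general structure ``follows the same pattern,'' which it does not --- so your instinct that $i>1$ is the one point demanding care was correct; but an honest repair has to change something, e.g.\ modify the translation (or the statement of the permutation) so that the commuted premise is the first one threaded, or weaken the claimed identity to a relation that the combined normalization argument of theorem~\ref{normdetperm} can still absorb, rather than reconcile the two unfoldings as they stand.
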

\begin{proof}
Property 2 follows from property 1, which is proven by distinguishing the cases for $>_{P}$. By~\cite[lemma 17]{degroote}, we shall limit to the $\Box,\vee$-permutations, and, as before, we consider the $\Box$-term $\mathsf{B}_{z}(\mathsf{C}_{x,y}(t,s_{1},s_{2}))\,\mathsf{in}\,s$, since the result for the more general structure follows the same pattern:\\
$\mathsf{B}_{z}(\mathsf{C}_{x,y}(t,s_{1},s_{2}))\,\mathsf{in}\,s : r =
\mathsf{C}_{x,y}(t,s_{1},s_{2}) : \lambda z.r\overline{\overline{s}} = \\ = t : \lambda m. m(\lambda x. (s_{1} : \lambda z. r\overline{\overline{s}}))(\lambda y.(s_{2} : \lambda z.r\overline{\overline{s}})) \underset{\mathsmaller{\alpha}}{=}
t : \lambda m. m(\lambda x. (s_{1} : \lambda u. r\overline{\overline{s}}))(\lambda y.(s_{2} : \lambda w.r\overline{\overline{s}})) = \\ = t : \lambda m. m(\lambda x.(\mathsf{B}_{u} s_{1}\,\mathsf{in}\, s : r))(\lambda y. \mathsf{B}_{w} s_{2}\,\mathsf{in}\, s : r)) = \mathsf{C}_{x,y}(t,\mathsf{B}_{u} s_{1}\,\mathsf{in}\, s, \mathsf{B}_{w} s_{2}\,\mathsf{in}\, s) : r$

as desired. 
\end{proof}

Normalization is now a direct consequence.
\begin{theorem}\label{normdetperm}
$\lambda^{\Box\rightarrow\wedge\vee}$ is strongly normalizing w.r.t.~the reduction relation induced by the union of detour- and permutation-conversions.
\end{theorem}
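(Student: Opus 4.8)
The plan is to run a measure argument that reduces strong normalization of the combined relation to the two separate normalization results already established, using the modified CPS-translation as the measure. The key observation is that the two kinds of conversions behave very differently under $\overline{\overline{(-)}}$: by Lemma~\ref{maind}, every detour step $t>_{D}s$ forces a strictly positive reduction $\overline{\overline{t}}\,\overset{\mathsmaller{+}}{>}_{\beta\eta}\,\overline{\overline{s}}$, whereas by the preceding permutation-invariance lemma a permutation step $t>_{P}s$ leaves the translation literally unchanged, $\overline{\overline{t}}=\overline{\overline{s}}$. Since $\overline{\overline{t}}$ is a genuine simply-typed term (Proposition~\ref{typing2}) and simple type theory is strongly normalizing~\cite{sorensen}, I would attach to each $\lambda^{\Box\rightarrow\wedge\vee}$-term $t$ the natural number $\mu(t)$ defined as the length of the \emph{longest} $\beta\eta$-reduction issuing from $\overline{\overline{t}}$.

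First I would check that $\mu$ is well-defined: the $\beta\eta$-reduction relation on simply-typed terms is finitely branching and well-founded, so by König's lemma the reduction tree of $\overline{\overline{t}}$ is finite, and $\mu(t)\in\mathbb{N}$ is simply its height. From the two facts above it then follows that $\mu$ is the measure we want: $t>_{D}s$ implies $\mu(t)>\mu(s)$, by prepending the at-least-one $\beta\eta$-step guaranteed by Lemma~\ref{maind} to a maximal reduction of $\overline{\overline{s}}$, whereas $t>_{P}s$ implies $\mu(t)=\mu(s)$.

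Now suppose, for contradiction, that some term admits an infinite reduction sequence $t_{0}>t_{1}>t_{2}>\cdots$ in which each step is either a $>_{D}$- or a $>_{P}$-step. Along this sequence the values $\mu(t_{i})$ are non-increasing and strictly decrease at every detour step; since they lie in $\mathbb{N}$, only finitely many detour steps can occur. Hence there is an index $N$ beyond which every step is a permutation, yielding an infinite reduction $t_{N}>_{P}t_{N+1}>_{P}\cdots$. This contradicts Lemma~\ref{normperm}, which asserts strong normalization with respect to $>_{P}$. Therefore no infinite combined reduction exists, and the claim follows.

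I expect the only genuinely delicate point to be the well-definedness of $\mu$, that is, the passage from ``strongly normalizing'' to ``finite maximal reduction length''; this is exactly where finite branching of $\beta\eta$ together with König's lemma are needed, and it is what allows a single integer to absorb the unbounded-but-finite number of detour steps interleaved among the permutations. Everything else is bookkeeping, since the substantive work has already been front-loaded into Lemma~\ref{maind} and the preceding permutation-invariance lemma; the combined statement is essentially a measure argument layered on top of them.
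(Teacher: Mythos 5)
Your proposal is correct and follows essentially the same route as the paper: both arguments use Lemma~\ref{maind} (detour steps induce nonempty $\beta\eta$-reductions of the CPS-translation), the permutation-invariance lemma ($>_{P}$ leaves $\overline{\overline{t}}$ unchanged), and strong normalization of simple type theory to conclude that any infinite mixed sequence contains only finitely many detour steps, whence an infinite tail of $>_{P}$-steps contradicts Lemma~\ref{normperm}. Your explicit measure $\mu$ via K\"onig's lemma merely makes rigorous the step the paper states tersely (its appeal to Proposition~\ref{normdet} for the finiteness of detour steps in an \emph{interleaved} sequence), so it is a welcome filling-in of detail rather than a different proof.
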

\begin{proof}
Suppose we have an infinite sequence of detour- and permutation-conversions starting with $t_{0}$. We can restrict to three cases:
\begin{itemize}
\item[a.] $t_{0} >_{D} t_{1} \gg_{D} t_{2} \gg_{D}\ldots$ : then by Lemma~\ref{maind}, we have   $\overline{\overline{t_{0}}}\gg_{\beta\eta}\overline{\overline{t_{1}}}\gg_{\beta\eta}\overline{\overline{t_{2}}}\gg_{\beta\eta}\ldots$ in simple type theory, contra normalization for $>_{\beta\eta}$;
\item[b.] $t_{0} \gg_{P} t_{1} \gg_{P} t_{2} \gg_{P}\ldots$: this cannot happen by Lemma~\ref{normperm};
\item[c.] $t_{0} \gg_{D} t_{1} \gg_{P} t_{2} \gg_{D}\ldots$: then by Lemma~\ref{normdet}, in simple type theory $\overline{\overline{t_{2m}}} \gg_{\beta\eta}\overline{\overline{t_{2m+1}}}$, and, by Lemma \ref{lemma19}, $\overline{\overline{t_{2m+1}}}=\overline{\overline{t_{2(m+1)}}}$ for any $m$. Again this is against normalization for $>_{\beta\eta}$.
\end{itemize}

\end{proof}

\subsection{Normalization for $\bot$-conversions}

The previous theorem establishes that the $\bot$-free fragment of $\mathsf{IEL}^{-}$ enjoys the subformula property, along with normalization of proofs.

In order to obtain the same results for the full calculus, we need some further efforts, but the general idea is the same as before.

Let $\lambda^{\Box\rightarrow\wedge\vee\bot}$ denote the type theory corresponding to the full natural deduction for intuitionistic belief.

Its syntax is an extension of that for $\lambda^{\Box\rightarrow\wedge\vee}$ by the type $\bot$ and the term $\mathsf{E}(t : A)$ -- corresponding, as usual, to $\bot$-elimination in the natural deduction.

For permutations, we need to add to Definition~\ref{permut} the following permutation:
 \begin{equation}\label{eqbot}
\mathsf{E}(\mathsf{C}_{x,y}(t,t_{1},t_{2}))>_{P}\mathsf{C}_{x,y}(t,\mathsf{E}(t_{1}),\mathsf{E}({t_{2}})).
\end{equation} %along with $$\mathsf{B}_{x_{1},\cdots,x_{i-1},x_{i},x_{i+1},\cdots,x_{n}}(t_{1},\cdots,t_{i-1},\mathsf{E}(t_{i}),t_{i+1},\cdots,t_{n})\,\mathsf{in}\, s >_{P} \mathsf{E}(t_{i}).$$

In \cite{degroote}, de Groote proves that $\mathsf{NJ}$ is strongly normalizing w.r.t.~the reduction relation induced by these extended permutation-conversions and detour-elimination.\footnote{The strategy is again a modified CPS translation of the full calculus and a definition of a norm on untyped terms: see \cite[§ 7]{degroote} for the details.}

To these ones, further $\bot$-conversions are added in order to obtain the subformula property for $\mathsf{NJ}$-normal proofs.
For $\mathsf{IEL}^{-}$, we proceed similarly, and the next definition integrates the additional $\bot$-conversions for $\mathsf{NJ}$ with a specific reduction involving the typing rule for the belief modality.

\begin{definition}[$\bot$-conversions for $\lambda^{\Box\rightarrow\wedge\vee\bot}$]\label{permbot} $\,$\\
\begin{enumerate}
\item $\mathsf{E}(t)s>_{\bot}\mathsf{E}(t)$
\item $\pi_{i}\mathsf{E}(t)>_{\bot}\mathsf{E}(t)$ for $i=1,2$
\item $\mathsf{C}_{x,y}(\mathsf{E}(t),t_{1},t_{2})>_{\bot}\mathsf{E}(t)$
\item $\mathsf{E}(\mathsf{E}(t))>_{\bot}\mathsf{E}(t)$
\item $\mathsf{B}_{x_{1},\cdots,x_{i-1},x_{i},x_{i+1},\cdots,x_{n}}(t_{1},\cdots,t_{i-1},\mathsf{E}(t_{i}),t_{i+1},\cdots,t_{n})\,\mathsf{in}\, s >_{\bot} \mathsf{E}(t_{i}).$
\end{enumerate}
\end{definition}

We immediately have

\begin{proposition}\label{normbot}$\lambda^{\Box\rightarrow\wedge\vee\bot}$ is strongly normalising w.r.t. $\bot$-conversions. 
\end{proposition}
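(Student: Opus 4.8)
The plan is to exhibit a single well-founded measure on untyped terms that strictly decreases under every $\bot$-conversion, so that no infinite $>_{\bot}$-reduction sequence can exist. The key structural observation, which makes the argument far easier than those for $>_{P}$ and $>_{D}$, is that none of the five rules in definition~\ref{permbot} performs a substitution or duplicates a subterm: in each case the right-hand side is literally a subterm of the left-hand side (namely $\mathsf{E}(t)$ in rules 1--4 and $\mathsf{E}(t_{i})$ in rule 5). Consequently no CPS translation and no subtle norm are needed, and a crude size measure will suffice.

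Concretely, I would first assign to every $\lambda^{\Box\rightarrow\wedge\vee\bot}$-term $t$ its \emph{length} $\ell(t)$, defined as the number of subterm occurrences in $t$, i.e.~the number of nodes of its syntax tree, counting each of the constructors $\lambda$, application, $\langle-,-\rangle$, $\pi_{i}$, $\mathsf{in}_{i}$, $\mathsf{C}$, $\mathsf{B}\ldots\mathsf{in}$, and $\mathsf{E}$. The only property of $\ell$ that the proof uses is its additivity over the tree structure: replacing one subterm occurrence $u$ inside a term by a term $u'$ changes the length by exactly $\ell(u')-\ell(u)$, so that for every context $C[\,\cdot\,]$ we have $\ell(C[u'])<\ell(C[u])$ whenever $u'$ is a proper subterm of $u$. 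This gives strict monotonicity of $\ell$ under the contextual (congruence) closure of any rule whose reduct is a proper subterm of its redex.

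Next I would verify, rule by rule, that each redex of definition~\ref{permbot} strictly exceeds its reduct in length. This is immediate in all five cases, since the reduct is a proper subterm of the redex: the conversion discards, respectively, the argument $s$, a projection $\pi_{i}$, the two branches $t_{1},t_{2}$ of a case split, an outer $\mathsf{E}$, or the entire surrounding $\mathsf{B}\ldots\mathsf{in}\,s$ together with the remaining premises $t_{j}$. By the monotonicity noted above, $t>_{\bot}s$ therefore implies $\ell(t)>\ell(s)$ for arbitrary $t,s$, reductions occurring at any position included.

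Finally, since $\ell$ takes values in $\mathbb{N}$, which is well-ordered, a strictly descending chain of lengths must be finite; hence every $>_{\bot}$-reduction sequence terminates, which is exactly strong normalization w.r.t.~$\bot$-conversions. I do not expect any genuine obstacle in carrying this out: the whole force of the argument is that $\bot$-conversions only \emph{erase} context and never substitute or copy subterms, so the naïve length measure already does the job — in sharp contrast with the more delicate norms and CPS translations required for $>_{P}$ and $>_{D}$. This is precisely why the proposition can be asserted with ``we immediately have''.
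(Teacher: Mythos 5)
Your proof is correct and follows essentially the same route as the paper: the paper's own argument is exactly that every $\bot$-conversion replaces a term by one of strictly smaller complexity, so the naive size measure terminates, and your write-up merely makes this precise by defining $\ell$, checking the five rules of definition~\ref{permbot}, and noting monotonicity under contextual closure. No gap; the extra detail about erasure (no substitution or duplication) is the right justification for why this measure suffices here, in contrast with $>_{D}$ and $>_{P}$.
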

\begin{proof}
Straightforward, for having, in any $\bot$-reduction, a term of smaller complexity on the right hand side of $>_{\bot}$ than that one on the left hand side.
\end{proof}

\section{Analyticity and corollaries}\label{sec3}

$\bot$-conversions are introduced because the permutations of Definition \ref{permut} extended by (\ref{eqbot}) determine the subformula property for $\mathsf{IEL}^{-}$ \emph{only if} $\bot$-elimination only introduces atomic formulas. On the other hand, the reductions of Definition \ref{permbot} -- though necessary for establishing the subformula property of normal $\mathsf{IEL}^{-}$-deductions -- break down the modified CPS translation of $\lambda^{\Box\rightarrow\wedge\vee\bot}$-terms into terms of simple type theory, for normal $\mathsf{IEL}^{-}$-deductions are no longer translated into normal deductions of the implicational fragment of $\mathsf{NJ}$.

Following again \cite{degroote}, in the next sections, we see how to solve the issue, so that we can apply our full computational analysis of intuitionistic belief to establish further properties of the system: the forthcoming lemmas are committed to that.

\subsection{Subformula property}

Gentzen introduced sequent systems in order to give a neat proof of the subformula principle by means of cut-elimination \cite{vonplato}; for both intuitionistic and classical logic, it is possible indeed to translate normal deductions into cut-free derivations in the appropriate sequent calculus and vice-versa, this way obtaining the subformula property for the corresponding natural deduction systems.

In spite of this, it is sometimes possible to give a more direct proof of the principle reasoning about the very natural deduction calculus: here we adopt this strategy.

Recall first from Proposition \ref{normbot} that $\mathsf{IEL}^{-}$ is strongly normalizable w.r.t.~$\bot$-conversions. In order to obtain the subformula property for our calculus we only need to show that we can postpone those conversions, as proven by the following lemmas.

\begin{lemma}\label{lem55} For any $\lambda^{\Box\rightarrow\wedge\vee\bot}$-terms $t,s,r$ such that $t>_{\bot}s$, we have $t[x:=r]>_{\bot}s[x:=r]$.
\end{lemma}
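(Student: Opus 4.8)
The plan is to argue by induction on the derivation of $t>_{\bot}s$, exploiting that $>_{\bot}$ is the contextual (one-step) closure of the five base schemes of Definition~\ref{permbot}. Throughout I would adopt the usual variable convention, assuming that the variables bound in $t$ (those bound by $\lambda$, by the case constructor $\mathsf{C}_{x,y}$, and by the modal constructor $\mathsf{B}_{\vec{x}}$) are distinct from $x$ and do not occur free in $r$. This guarantees that the substitution $[x:=r]$ distributes over every term constructor without capture, i.e.\ $(\mathsf{E}(u))[x:=r]=\mathsf{E}(u[x:=r])$, $(uv)[x:=r]=(u[x:=r])(v[x:=r])$, and likewise for pairing, projections, injections, $\mathsf{C}$, and $\mathsf{B}$.

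First I would treat the base case, where the contracted redex sits at the root of $t$, so that $t$ matches one of the five left-hand sides of Definition~\ref{permbot}. Each such case is a one-line verification: applying $[x:=r]$ to the redex yields, by the distributivity just noted, a term of exactly the same syntactic shape, hence again an instance of the same $\bot$-scheme, whose contractum is precisely $s[x:=r]$. For instance, if $t=\mathsf{E}(u)\,v$ reduces to $\mathsf{E}(u)=s$ by scheme~1, then $t[x:=r]=\mathsf{E}(u[x:=r])\,(v[x:=r])>_{\bot}\mathsf{E}(u[x:=r])=s[x:=r]$; the projection, case-split, nested-$\mathsf{E}$, and modal scheme~5 are entirely analogous.

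Next I would handle the inductive step, where the redex lies strictly inside a proper subterm, so $t$ and $s$ share the same outermost constructor $C$ and differ only in one immediate argument $t_{j}$, with $t_{j}>_{\bot}s_{j}$. Pushing the substitution through $C$ gives $t[x:=r]=C(\dots,t_{j}[x:=r],\dots)$ and $s[x:=r]=C(\dots,s_{j}[x:=r],\dots)$; the induction hypothesis yields $t_{j}[x:=r]>_{\bot}s_{j}[x:=r]$, and closing under the context $C$ reestablishes $t[x:=r]>_{\bot}s[x:=r]$.

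The only delicate point, and hence the part that will need the most care, is this commutation of substitution with the binding constructors $\lambda$, $\mathsf{C}_{x,y}$, and $\mathsf{B}_{\vec{x}}$ in the inductive step. This is secured precisely by the variable convention adopted at the outset, so that no genuine obstacle remains and the lemma follows.
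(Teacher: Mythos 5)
Your proof is correct and follows essentially the same route as the paper, whose proof consists of the single remark that the claim is ``straightforward, after distinguishing the cases for $>_{\bot}$''; your case analysis on the five schemes of Definition~\ref{permbot}, together with the contextual-closure step and the variable convention for the binders $\lambda$, $\mathsf{C}_{x,y}$, and $\mathsf{B}_{\vec{x}}$, is exactly the detail the paper leaves implicit.
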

\begin{proof}
Straightforward, after distinguishing the cases for $>_{\bot}$.
\end{proof}

\begin{lemma}\label{lemm56} For any $\lambda^{\Box\rightarrow\wedge\vee\bot}$-terms $t,s,r$ such that $s>_{\bot}r$, we have \\\mbox{$t[x:=s]\gg_{\bot}t[x:=r]$.}
\end{lemma}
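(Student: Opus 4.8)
The plan is to argue by induction on the structure of $t$, exploiting that the one-step relation $>_{\bot}$ of Definition \ref{permbot} is obtained as the contextual closure of the listed rewrite rules, and is therefore a \emph{congruence}: whenever $a>_{\bot}a'$ one has $C[a]>_{\bot}C[a']$ for every term context $C$. Consequently its reflexive–transitive closure $\gg_{\bot}$ is compatible with every term constructor of $\lambda^{\Box\rightarrow\wedge\vee\bot}$; that is, if $a\gg_{\bot}a'$ and $b\gg_{\bot}b'$, then $ab\gg_{\bot}a'b'$, $\langle a,b\rangle\gg_{\bot}\langle a',b'\rangle$, and likewise for $\pi_{i}$, $\mathsf{in}_{i}$, $\lambda$, $\mathsf{E}$, $\mathsf{C}$, and $\mathsf{B}\,\mathsf{in}\,$. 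This compatibility is exactly what lets me lift reductions performed in the immediate subterms to the whole term.

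For the base cases: if $t=x$, then $t[x:=s]=s$ and $t[x:=r]=r$, and since $s>_{\bot}r$ is a single step it lies in the reflexive–transitive closure, so $s\gg_{\bot}r$. If $t=y$ is a variable distinct from $x$, then $t[x:=s]=y=t[x:=r]$, and the claim holds by reflexivity of $\gg_{\bot}$. For the inductive step I would run through each constructor. For a non-binding constructor, say $t=t_{1}t_{2}$, substitution distributes, so $t[x:=s]=t_{1}[x:=s]\,t_{2}[x:=s]$ and $t[x:=r]=t_{1}[x:=r]\,t_{2}[x:=r]$; applying the induction hypothesis to $t_{1}$ and $t_{2}$ and then the compatibility of $\gg_{\bot}$ with application yields the required reduction. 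The remaining non-binding cases ($\langle\,,\,\rangle$, $\pi_{i}$, $\mathsf{in}_{i}$, $\mathsf{E}$) are identical in shape. For the binding constructors ($\lambda$, $\mathsf{C}$, $\mathsf{B}\,\mathsf{in}\,$) I first rename the bound variables by $\alpha$-conversion so that they are distinct from $x$ and do not occur free in $s$ or $r$; substitution then again distributes over the constructor, the induction hypothesis applies to each immediate subterm, and compatibility closes the case.

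The one point requiring care — and the \textbf{main obstacle} in an otherwise routine argument — is that $x$ may occur several times in $t$, so the passage from $t[x:=s]$ to $t[x:=r]$ replaces several copies of $s$ by $r$; this is precisely why the conclusion is stated with the many-step relation $\gg_{\bot}$ rather than the single-step $>_{\bot}$. The congruence packaging above absorbs this automatically: reducing the subterms one after another assembles the individual $>_{\bot}$-steps into a single $\gg_{\bot}$-sequence, with the empty sequence (reflexivity) covering the case in which $x$ does not occur in $t$ at all. Note that, unlike the companion Lemma \ref{lem55}, the present statement does not require the rewrite rules to be stable under substitution into their \emph{left} argument; it only uses that reductions performed in subterms propagate to the enclosing term, which is immediate from the contextual definition of $>_{\bot}$.
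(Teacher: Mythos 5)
Your proof is correct and takes essentially the same route as the paper, whose entire proof of this lemma is ``By induction on $t$'': you carry out that structural induction in full, correctly using that $>_{\bot}$ is the contextual closure of the rules in Definition~\ref{permbot} (so $\gg_{\bot}$ is compatible with all term constructors), and correctly identifying that multiple occurrences of $x$ in $t$ are exactly why the conclusion needs $\gg_{\bot}$ rather than a single step.
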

\begin{proof}
By induction on $t$.
\end{proof}

\begin{lemma}\label{post} Let $R\in\{D,P\}$ and let $r,s,t$ be $\lambda^{\Box\rightarrow\wedge\vee\bot}$-terms such that $r>_{\bot}s>_{R}t$. Then there exists a $\lambda^{\Box\rightarrow\wedge\vee\bot}$-term $k$ such that $r\overset{\mathsmaller{+}}{>}_{R}k\gg_{\bot}t$.
\end{lemma}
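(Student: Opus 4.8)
The plan is to prove a local postponement property: whenever a $\bot$-conversion is immediately followed by a detour- or permutation-conversion, the two can be re-ordered so that the $R$-conversions come first. I would argue by a case analysis on the relative positions in $r$ of the $\bot$-redex contracted in $r>_{\bot}s$ and the $R$-redex contracted in $s>_{R}t$. The structural fact that makes everything work is that the $\bot$-conversions of Definition~\ref{permbot} (extended by~(\ref{eqbot})) are \emph{purely erasing}: each one rewrites an elimination applied to a term of the form $\mathsf{E}(\cdot)$ into $\mathsf{E}(\cdot)$, never substitutes, and its contractum is always headed by $\mathsf{E}$. Since $\mathsf{E}(u)$ can never serve as the principal premise of a detour or of a permutation (those require a head $\lambda$, $\langle-\rangle$, $\mathsf{in}_{i}$, $\mathsf{C}$, or $\mathsf{B}$), a $\bot$-conversion can create no $D$- or $P$-redex. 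Consequently the $R$-redex contracted in $s$ always descends from material already present in $r$, and the strategy is to contract it (or its ancestors) in $r$ first and then clean up the remaining $\bot$-redexes with $\gg_{\bot}$.

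The routine cases are as follows. If the two redex occurrences are disjoint, contracting them commutes and I get $r>_{R}k>_{\bot}t$ directly. If the $\bot$-redex site lies inside a side subterm of the $R$-redex --- a branch of a $\mathsf{C}$, an argument of an application, or a premise of a $\mathsf{B}$ --- then the $R$-reduction still applies in $r$ with $\mathsf{elim}(\mathsf{E}(u))$ sitting in that slot; performing it may duplicate or delete the $\bot$-redex, which is exactly why the conclusion uses the reflexive–transitive closure $\gg_{\bot}$ and not a single $\bot$-step. For the detour rules that proceed by substitution, namely $(\lambda x.t)s>_{D}t[x:=s]$, the $\mathsf{C}$-$\mathsf{in}_{i}$ rule, and the $\mathsf{B}$-$\mathsf{B}$ rule, I would invoke Lemma~\ref{lem55} and Lemma~\ref{lemm56} to push the residual $\bot$-conversions through the substituted copies, so that the two admissible orders of reduction coincide.

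The hard part will be the nested-elimination case, where contracting the $\bot$-redex first \emph{re-exposes} an inner elimination as a permutation redex. A representative instance is $r=\pi_{i}\mathsf{E}(\mathsf{C}_{x,y}(t,t_{1},t_{2}))$: here $r>_{\bot}\mathsf{E}(\mathsf{C}_{x,y}(t,t_{1},t_{2}))=s$ and then, by~(\ref{eqbot}), $s>_{P}\mathsf{C}_{x,y}(t,\mathsf{E}(t_{1}),\mathsf{E}(t_{2}))=t$. To postpone $\bot$ I must first permute twice, $r>_{P}\pi_{i}\mathsf{C}_{x,y}(t,\mathsf{E}(t_{1}),\mathsf{E}(t_{2}))>_{P}\mathsf{C}_{x,y}(t,\pi_{i}\mathsf{E}(t_{1}),\pi_{i}\mathsf{E}(t_{2}))=k$, and only then collapse the two $\bot$-redexes, $k\gg_{\bot}t$. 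This shows concretely why a single $R$-step does not suffice and why the transitive closure $\overset{\mathsmaller{+}}{>}_{R}$ is forced. I would treat these reconfigurations uniformly by observing that each such chain of permutations that precedes the $\bot$-cleanup is finite --- it strictly relocates eliminations outward across a single $\mathsf{C}$, matching the pattern already controlled by the permutation norm of Lemma~\ref{normperm} --- so a witness $k$ with $r\overset{\mathsmaller{+}}{>}_{R}k\gg_{\bot}t$ always exists, completing the case analysis and the proof.
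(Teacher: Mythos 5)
Your proposal is correct and rests on the same technique as the paper's proof: local postponement by case analysis on the relative positions of the two redexes, with the nested-elimination critical pair --- where the erasing $\bot$-step exposes an (\ref{eqbot})-redex --- accounting for the need of the transitive closure $\overset{\mathsmaller{+}}{>}_{R}$. The routes differ in decomposition. The paper does not redo the case analysis: it cites \cite[lemma 57]{degroote}, which is precisely this postponement statement for the non-modal fragment (so it already covers your worked example $\pi_{i}\mathsf{E}(\mathsf{C}_{x,y}(t,t_{1},t_{2}))$), and then verifies the single case that is new for $\mathsf{IEL}^{-}$: the modal critical pair $r=\mathsf{B}_{z}(\mathsf{E}(\mathsf{C}_{x,y}(t_{1},t_{2},t_{3})))\,\mathsf{in}\,s_{1}$, where the modal $\bot$-conversion (rule 5 of Definition \ref{permbot}) erases the $\mathsf{B}$ and exposes the (\ref{eqbot})-redex. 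That case follows your $\pi_{i}$ pattern verbatim: first apply (\ref{eqbot}) under the $\mathsf{B}$, then the $\Box,\vee$-permutation of Definition \ref{permut}, reaching $k=\mathsf{C}_{x,y}(t_{1},\mathsf{B}_{z}(\mathsf{E}(t_{2}))\,\mathsf{in}\,s_{1},\mathsf{B}_{z}(\mathsf{E}(t_{3}))\,\mathsf{in}\,s_{1})$, and then two applications of the modal $\bot$-conversion yield $t=\mathsf{C}_{x,y}(t_{1},\mathsf{E}(t_{2}),\mathsf{E}(t_{3}))$. What your version buys is self-containedness, and your structural observation that $\bot$-contracta are always $\mathsf{E}$-headed --- hence a $\bot$-step never creates a $D$- or $P$-redex, so the $R$-redex contracted in $s$ descends from one in $r$ --- is a clean justification the paper leaves implicit; what the paper's version buys is brevity and a sharp isolation of what is genuinely new for the modal calculus. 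If your proof is to stand in for the paper's, you should display the modal critical pair explicitly rather than leave it under the uniform-treatment clause, since it is the only case not already in \cite{degroote}; and you can drop the appeal to Lemma \ref{normperm}: in each critical pair the $R$-prefix is exactly two steps, so no termination argument is needed there.
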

\begin{proof}
By \cite[lemma 57]{degroote}, we only need to consider the following critical pair:
$$\begin{matrix}
\mathsf{B}_{z}(\mathsf{E}(\mathsf{C}_{x,y}(t_{1},t_{2},t_{3})))\,\mathsf{in}\, s_{1} & >_{P} & \mathsf{B}_{z}(\mathsf{C}_{x,y}(t_{1},\mathsf{E}(t_{2}),\mathsf{E}(t_{3})))\,\mathsf{in}\,s_{1}\\
 & >_{P} & \mathsf{C}_{x,y}(t_{1},\mathsf{B}_{z}(\mathsf{E}(t_{2})\,\mathsf{in}\,s_{1}),\mathsf{B}_{z}(\mathsf{E}(t_{3})\,\mathsf{in}\,s_{1})) \\
 & \gg_{\bot} & \mathsf{C}_{x,y}(t_{1},\mathsf{E}(t_{2}),\mathsf{E}(t_{3}))\\
\end{matrix}$$

whenever 

$$\begin{matrix}
\mathsf{B}_{z}(\mathsf{E}(\mathsf{C}_{x,y}(t_{1},t_{2},t_{3})))\,\mathsf{in}\, s_{1} 
 & >_{\bot} & \mathsf{E}(\mathsf{C}_{x,y}(t_{1},t_{2},t_{3})) \\
 & >_{P} & \mathsf{C}_{x,y}(t_{1},\mathsf{E}(t_{2}),\mathsf{E}(t_{3}))\\
\end{matrix}$$

The case for the general modal term is proven similarly.
\end{proof}

At this point, we can finally state our main results.

\begin{theorem}[Full normalization for $\mathsf{IEL}^{-}$]
$\mathsf{IEL}^{-}$ is strongly normalizing w.r.t.~the reduction relation induced by the union of detour-, permutation-, and $\bot$-conversions.
\end{theorem}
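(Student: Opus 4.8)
The plan is to reduce this final statement to the results already established, by the same abstract commutation argument used in Theorem~\ref{normdetperm}. We already know from that theorem that $\lambda^{\Box\rightarrow\wedge\vee\bot}$ is strongly normalizing w.r.t.~the union of $>_{D}$ and $>_{P}$, and from Proposition~\ref{normbot} that it is strongly normalizing w.r.t.~$>_{\bot}$ alone. The task is therefore only to glue these together with the postponement result of Lemma~\ref{post}, which tells us that a $\bot$-conversion followed by a $D$- or $P$-conversion can always be rearranged into one-or-more $D$/$P$-conversions followed by zero-or-more $\bot$-conversions. Since each conversion relation is individually terminating, the heart of the matter is that $>_{\bot}$ cannot create fresh $D$/$P$-redexes faster than they are consumed.

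First I would argue by contradiction, assuming an infinite reduction sequence $t = t_{0} > t_{1} > t_{2} > \cdots$ in the combined relation $>_{D} \cup >_{P} \cup >_{\bot}$. By Proposition~\ref{normbot} the sequence cannot consist of $\bot$-conversions from some point onward, so it must contain infinitely many $D$- or $P$-steps. Next I would use Lemma~\ref{post} to permute every $\bot$-step that immediately precedes a $D$- or $P$-step past that step: each application replaces a pattern $r >_{\bot} s >_{R} t$ (with $R \in \{D,P\}$) by $r \overset{+}{>}_{R} k \gg_{\bot} t$, thereby moving the $\bot$-conversion strictly later in the sequence while producing at least one $R$-conversion earlier. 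Iterating this postponement pushes all $\bot$-steps toward the tail; since there are infinitely many $D/P$-steps, this yields an infinite reduction using only $>_{D}$ and $>_{P}$ (extracted as the prefix of $R$-steps that accumulates), contradicting Theorem~\ref{normdetperm}.

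The delicate point is to make the ``pushing to the tail'' argument genuinely produce an infinite $D/P$-reduction rather than merely rearranging a finite prefix. The clean way to phrase this is via a standard strong-normalization-under-commutation lemma: if $\to_{1}$ is strongly normalizing, $\to_{2}$ is strongly normalizing, and $\to_{2}$ \emph{postpones} after $\to_{1}$ in the sense of Lemma~\ref{post} (every $\to_{2}\to_{1}$ segment factors as $\overset{+}{\to}_{1}$ followed by $\twoheadrightarrow_{2}$), then $\to_{1}\cup\to_{2}$ is strongly normalizing. Here $\to_{1}$ is $>_{D}\cup>_{P}$ and $\to_{2}$ is $>_{\bot}$. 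The hard part is not any new calculation but getting the well-foundedness bookkeeping right: one must check that the postponement transformation, applied repeatedly to the hypothetical infinite sequence, strictly decreases some measure of ``$\bot$-steps occurring before the $n$-th $D/P$-step'' at each stage, so that in the limit every initial segment of the rearranged sequence is a pure $D/P$-reduction. Granting Lemma~\ref{post}, Proposition~\ref{normbot}, and Theorem~\ref{normdetperm}, this commutation principle closes the argument and yields full strong normalization for $\mathsf{IEL}^{-}$.
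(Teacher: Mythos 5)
Your proposal is correct and follows essentially the same route as the paper: the paper's own proof likewise assumes an infinite mixed reduction sequence, applies Lemma~\ref{post} repeatedly to extract an infinite sequence of $D/P$-conversions (contradicting Theorem~\ref{normdetperm}), and disposes of the remaining case of an infinite tail of $\bot$-conversions by Proposition~\ref{normbot}. The only difference is one of explicitness: the paper states the postponement argument in two lines, whereas you spell out the commutation principle and the well-foundedness bookkeeping that makes ``pushing $\bot$-steps to the tail'' genuinely yield an infinite $D/P$-reduction, which is a point the paper leaves implicit.
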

\begin{proof}
Suppose otherwise. By repeatedly applying Lemma \ref{post}, we would construct an infinite sequence of $R$-reductions for $R\in\{D,P\}$, contra Theorem \ref{normdetperm}. Therefore, we must have an infinite sequence of $\bot$-conversions, but this is not in the case by Proposition \ref{normbot}.
\end{proof}

\begin{theorem}[Subformula property for $\mathsf{IEL}^{-}$] Every formula occurring in a normal \mbox{$\mathsf{IEL}^{-}$-deduction} of $A$ from assumptions $\Gamma$ is a subformula of $A$ or of some formula in $\Gamma$.
\end{theorem}		
\begin{proof}
We use the main theorem and reason as in \cite[§ II.3]{prawitz1971}.
\end{proof}

\begin{corollary} The calculus $\mathsf{IEL}^{-}$ is decidable.
\end{corollary}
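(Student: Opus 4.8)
The final statement to prove is that the calculus $\mathsf{IEL}^{-}$ is decidable.

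The plan is to derive decidability as a direct consequence of the subformula property together with full normalization, which are the two results established immediately above. The guiding idea is that normalization gives us a complete proof-search procedure: if $\Gamma \vdash_{\mathsf{IEL}^{-}} A$ is derivable at all, then it is derivable by a \emph{normal} deduction, and the subformula property bounds the syntactic material such a normal deduction can contain. The decision procedure then amounts to an exhaustive but \emph{finite} search through candidate normal deductions.

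First I would invoke the full normalization theorem to reduce the question ``is $A$ derivable from $\Gamma$?'' to the question ``is there a \emph{normal} $\mathsf{IEL}^{-}$-deduction of $A$ from $\Gamma$?'', since every deduction reduces to a normal one of the same conclusion and open assumptions. Next I would appeal to the subformula property: every formula occurring in a normal deduction of $A$ from $\Gamma$ is a subformula of $A$ or of some formula in $\Gamma$. Since $A$ and the formulas of $\Gamma$ have only finitely many subformulas, there is a fixed finite set $\mathcal{S}$ of formulas that may appear anywhere in such a deduction. I would then argue that the space of normal deductions labelled only by formulas from $\mathcal{S}$ is effectively finite: one needs a bound on the \emph{height} (or size) of a candidate normal deduction, which follows because in a normal deduction the major premisses of elimination rules are themselves introduced or assumed, so branches cannot grow arbitrarily without repeating subformula-labelled configurations that could be pruned. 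This yields an effective enumeration of all normal deduction-skeletons over $\mathcal{S}$, each of which can be mechanically checked for well-formedness and for matching the required conclusion and assumptions.

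Concretely, I would combine these observations into the following procedure: given $\Gamma$ and $A$, compute the finite set $\mathcal{S}$ of subformulas of $\Gamma \cup \{A\}$, enumerate the finitely many normal $\mathsf{IEL}^{-}$-deductions all of whose formula occurrences lie in $\mathcal{S}$ and whose inference steps respect the rules of the calculus, and accept iff one of them concludes $A$ with open assumptions contained in $\Gamma$. Soundness of the procedure is immediate, and completeness follows from normalization plus the subformula property, which guarantee that any derivable sequent is witnessed by a deduction inside this finite search space.

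The main obstacle, and the only point requiring genuine care, is establishing the effective \emph{finiteness} of the search space rather than merely a finite formula-alphabet: one must rule out the possibility of arbitrarily long normal deductions over the fixed finite set $\mathcal{S}$. I would address this by bounding the length of any path through a normal deduction in terms of the structure of the formulas in $\mathcal{S}$ (using that, along an elimination path in a normal deduction, the degree of the principal formulas strictly decreases, and that the new modal rule does not create unbounded detours once detour- and permutation-conversions are exhausted), so that only finitely many normal deductions over $\mathcal{S}$ exist up to the relevant notion of identity. Since the analogous bound for the propositional fragment $\mathsf{NJ}$ is classical, the real work is confined to checking that the $\Box$-introduction rule, with its extra hypotheses $\Delta$, respects the same subformula-bounded path-shortening behaviour, which the subformula property already ensures.
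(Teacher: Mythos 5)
Your proposal is correct and follows essentially the same route as the paper, whose entire proof is the one-line observation that the subformula property bounds proof search by the complexity of the goal formula. The only difference is one of detail: you explicitly identify and address the step the paper leaves implicit---that a finite subformula alphabet alone does not bound the \emph{size} of normal deductions, so one must also bound heights (e.g.\ by pruning repeated subformula-labelled configurations along a branch)---which is a genuine refinement of, not a departure from, the paper's argument.
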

\begin{proof}
By the subformula property, proof search for $\mathsf{IEL}^{-}$ is bounded by the complexity of the formula we wish to deduce in the very calculus.
\end{proof}

\subsection{Further properties}
Having established both normalization and the subformula principle for $\mathsf{IEL}^{-}$, it is now relatively easy to investigate on some further proof-theoretic properties of the system.

First recall that a deduction is said to be \textbf{neutral} iff it consists of a simple assumption, or its last rule is an elimination rule of the natural deduction calculus.

We immediately have the following fact.

\begin{proposition}\label{neutr} In any normal and neutral $\mathsf{IEL}^{-}$-deduction of $A$ from $\Gamma$, $\Gamma\not =\varnothing$.
\end{proposition}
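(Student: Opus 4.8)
The plan is to argue by induction on the height of the normal neutral deduction $\mathcal{D}$ of $A$ from $\Gamma$, exploiting the fact that normality forbids the major premise of an elimination rule from being the conclusion of a matching introduction rule. For the base case, a neutral deduction of height zero consists of a single assumption occurrence, so $A\in\Gamma$ and hence $\Gamma\neq\varnothing$.

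For the inductive step, since $\mathcal{D}$ is neutral and is not a bare assumption, its last rule is an elimination rule $r$ among $\wedge$-, $\rightarrow$-, $\vee$-, and $\bot$-elimination (there is no $\Box$-elimination, and $\Box$-introduction is excluded for being an introduction rule). Let $\mathcal{D}'$ be the immediate subdeduction deriving the major premise of $r$, that is, the premise displaying the connective eliminated by $r$, or the premise $\bot$ in the case of $\bot$-elimination. The key observation is that $\mathcal{D}'$ cannot end with an introduction rule: if it did, that rule would introduce precisely the outermost connective of the major premise -- since introduction rules are connective-specific -- thereby producing a detour, i.e.~a $>_{D}$-redex, against normality; and for $\bot$-elimination there is simply no introduction rule concluding $\bot$. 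Hence $\mathcal{D}'$ is itself neutral, and, being a subdeduction of a normal deduction, it is normal.

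Applying the induction hypothesis to $\mathcal{D}'$ yields that its set $\Gamma'$ of open assumptions is non-empty. It then remains to observe that $\Gamma'\subseteq\Gamma$: inspecting the elimination rules, no assumption is discharged along the major-premise branch, since discharges occur only in the minor subdeductions of $\vee$-elimination (the parcels $[A]$ and $[B]$) and, more generally, in the rightmost subdeduction of $\Box$-introduction, none of which lie on the branch of the major premise of $r$. Therefore every open assumption of $\mathcal{D}'$ remains open in $\mathcal{D}$, so $\varnothing\neq\Gamma'\subseteq\Gamma$, whence $\Gamma\neq\varnothing$.

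The crux -- and the only genuine obstacle -- is the key observation that in a normal deduction the major premise of an elimination rule is not the conclusion of an introduction rule; once this is secured from $>_{D}$-normality, the induction is routine. One should only be mildly careful in the $\bot$-elimination case, where the relevant fact is the outright absence of a $\bot$-introduction rule rather than the detour criterion itself.
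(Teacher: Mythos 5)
Your proof is correct and takes exactly the route the paper intends: the paper's own proof is just the one-line remark ``straightforward induction on the height of the deduction,'' and your argument is that induction spelled out in full, with the key observation (normality blocks an introduction rule above the major premise of an elimination, and $\bot$ has no introduction) and the bookkeeping that no assumptions are discharged along the major-premise branch both handled properly.
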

\begin{proof}
Straightforward induction on the height of the deduction.
\end{proof}

From this, we can establish a canonicity result for $\mathsf{IEL}^{-}$-deductions.

\begin{lemma}[Canonicity]\label{canonicity} In any normal $\mathsf{IEL}^{-}$-deduction of $A$, the last rule applied is the introduction rule for the main connective of $A$.
\end{lemma}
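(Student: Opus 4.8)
The plan is to prove the canonicity lemma by induction on the structure of the formula $A$, exploiting the subformula property together with Proposition~\ref{neutr}. First I would observe that a normal deduction of $A$ (from the empty set of assumptions) is either \emph{neutral} or it ends with an introduction rule. If it were neutral, then by Proposition~\ref{neutr} the set of open assumptions would be nonempty; but we are deducing $A$ from $\varnothing$, so the deduction cannot be neutral. Hence its last rule must be an introduction rule.

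Next I would argue that the introduced connective is precisely the main connective of $A$. By the subformula property (the preceding corollary), every formula occurring in a normal deduction of $A$ from $\varnothing$ is a subformula of $A$. Since the final step is an introduction rule, it introduces some connective $\circ$ and its conclusion is the whole formula $A$; thus $A$ has $\circ$ as its outermost connective, i.e.\ $\circ$ is the main connective of $A$. I would run through the available introduction rules ($\wedge$-intro, $\rightarrow$-intro, $\vee$-intro, $\Box$-intro, and the $\top$-case) to confirm that in each case the conclusion displays the corresponding connective as its principal one, so no mismatch can occur.

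The only genuinely delicate point is the $\Box$-introduction rule, since unlike the other introductions it has several premises, among which $\Box A_{1},\dots,\Box A_{n}$ are themselves boxed; one must check that these premises do not secretly force the deduction to be neutral or to violate normality. Here I would rely on the normal-form structure: in a normal deduction the boxed premises of $\Box$-intro are themselves normal deductions of $\Box A_i$, and the conclusion of the whole step is $\Box B$, so the last applied rule is unambiguously the introduction for $\Box$, matching the main connective of $A=\Box B$. This is the step I expect to require the most care, because the modal rule is non-standard in shape; but the combination of the subformula property and Proposition~\ref{neutr} suffices to rule out any other possibility, completing the induction.
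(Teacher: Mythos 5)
Your proof is correct and takes essentially the same route as the paper's: apply Proposition~\ref{neutr} with $\Gamma=\varnothing$ to rule out a neutral deduction, conclude that the last rule is an introduction, and observe that an introduction rule by its very shape introduces the main connective of $A$. Note that the induction on the structure of $A$ you announce is never actually performed (none is needed), and the appeal to the subformula property is likewise superfluous --- the paper's proof consists exactly of the Proposition~\ref{neutr} step plus the observation about introduction rules.
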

\begin{proof}
By Proposition \ref{neutr}, since $\Gamma=\varnothing$ here, and the deduction is normal, its last rule cannot be an elimination.
\end{proof}

\begin{lemma}[Consistency] $\mathsf{IEL}^{-}$ is consistent.
\end{lemma}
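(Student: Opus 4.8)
The plan is to show that $\mathsf{IEL}^{-}\not\vdash\bot$, i.e.~that there is no closed normal deduction of $\bot$. By the Full normalization theorem, if $\mathsf{IEL}^{-}$ proved $\bot$, then it would have a \emph{normal} deduction of $\bot$ from the empty set of assumptions. So it suffices to rule out the existence of such a normal deduction.

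First I would invoke the Canonicity lemma. Since any normal $\mathsf{IEL}^{-}$-deduction of $\bot$ from $\varnothing$ exists (by hypothesis, for contradiction), Canonicity tells us that its last rule must be the introduction rule for the main connective of the conclusion. But $\bot$ has no introduction rule in the calculus -- it is a propositional constant with only an elimination rule ($\bot$-elimination, corresponding to $\mathsf{E}$). Hence the last rule cannot be an introduction, giving the desired contradiction. Alternatively, and perhaps more transparently, one can argue directly via Proposition~\ref{neutr}: a normal deduction of $\bot$ from $\varnothing$ cannot end in an introduction (as $\bot$ is not introduced by any rule), so it is neutral; but then Proposition~\ref{neutr} forces $\Gamma\neq\varnothing$, contradicting $\Gamma=\varnothing$.

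Concretely, the steps in order are: assume for contradiction $\mathsf{IEL}^{-}\vdash\bot$; apply Full normalization to obtain a closed normal deduction of $\bot$; observe that since $\bot$ admits no introduction rule, this normal deduction cannot terminate with an introduction, so it is neutral; finally apply Proposition~\ref{neutr} to conclude $\varnothing\neq\varnothing$, a contradiction. Therefore $\mathsf{IEL}^{-}\not\vdash\bot$, and the calculus is consistent.

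I do not expect any serious obstacle here: the entire content has been front-loaded into the normalization theorem and the structural Proposition~\ref{neutr}, so consistency drops out as a one-line corollary of the analyticity results. The only point requiring a moment's care is making explicit that $\bot$ has \emph{no} introduction rule in $\mathsf{IEL}^{-}$ (it occurs only as the premise of $\bot$-elimination), which is what guarantees a closed normal proof of $\bot$ must be neutral and thus falls under Proposition~\ref{neutr}.
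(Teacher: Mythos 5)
Your proof is correct and follows essentially the same route as the paper: normalize a hypothetical deduction of $\bot$, invoke canonicity, and observe that $\bot$ has no introduction rule. Your alternative phrasing via Proposition~\ref{neutr} merely unpacks the paper's canonicity lemma (which itself rests on that proposition), so it is the same argument in slightly more explicit form.
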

\begin{proof}
Consistency follows from canonicity and full normalization, since if we had a deduction of $\bot$, we could normalize it and find a canonical proof, but there is no introduction rule for absurdity in the calculus.
\end{proof}

As further corollaries, we can give syntactic proofs of some structural properties of $\mathsf{IEL}^{-}$.

\begin{corollary} Reflection rule is admissible in $\mathsf{IEL}^{-}$: If $\mathsf{IEL}^{-}\vdash\Box A$, then $\mathsf{IEL}^{-}\vdash A$.
\end{corollary}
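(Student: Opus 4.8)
The plan is to combine the full normalization theorem with the canonicity lemma just proven, and then argue by induction on the height of a normal deduction. First I would assume $\mathsf{IEL}^{-}\vdash\Box A$, so that there is a deduction of $\Box A$ from the empty set of assumptions; by full normalization I may take this deduction $\mathcal{D}$ to be normal. Since the main connective of $\Box A$ is $\Box$ and the context is empty, canonicity forces the last rule of $\mathcal{D}$ to be the $\Box$-introduction rule.

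Next I would read off the premises of that final $\Box$-introduction. By the shape of the rule it consists of subdeductions $\mathcal{D}_{1},\dots,\mathcal{D}_{n}$ concluding $\Box A_{1},\dots,\Box A_{n}$, together with a subdeduction $\mathcal{D}'$ of $A$ from the discharged assumptions $A_{1},\dots,A_{n}$ (here the conclusion $\Box B$ is $\Box A$, so $B=A$). Because the open assumptions of $\mathcal{D}$ are exactly the union of the $\Gamma_{i}$ and $\Delta$ appearing in the rule, and $\mathcal{D}$ has empty context, each $\Gamma_{i}$ and $\Delta$ must be empty; hence every $\mathcal{D}_{i}$ is a deduction of $\Box A_{i}$ from $\varnothing$, and $\mathcal{D}'$ has no open assumptions other than the $A_{i}$.

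The inductive step then runs as follows. When $n=0$ there are no boxed premises and $\mathcal{D}'$ is already a deduction of $A$ from $\varnothing$, which settles the base case directly. When $n>0$, each $\mathcal{D}_{i}$ is a subdeduction of the normal deduction $\mathcal{D}$, hence itself normal and of strictly smaller height; since it establishes $\mathsf{IEL}^{-}\vdash\Box A_{i}$, the induction hypothesis yields $\mathsf{IEL}^{-}\vdash A_{i}$, i.e.\ a deduction $\mathcal{E}_{i}$ of $A_{i}$ from $\varnothing$. I would then substitute each $\mathcal{E}_{i}$ for the discharged assumption $A_{i}$ in $\mathcal{D}'$; by the usual assumption-substitution for natural deduction this produces a deduction of $A$ whose only open assumptions were the $A_{i}$, now all closed, so it is a deduction of $A$ from $\varnothing$, witnessing $\mathsf{IEL}^{-}\vdash A$.

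The only delicate points, rather than genuine obstacles, are the bookkeeping that guarantees the subdeductions $\mathcal{D}_{i}$ live in the empty context — so that the induction hypothesis applies — and the observation that normality is inherited by subdeductions, so that canonicity may be reapplied at each recursive call. Both are immediate from the form of the $\Box$-introduction rule and from the definition of a normal deduction, and the concluding substitution step is entirely routine.
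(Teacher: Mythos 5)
Your proof is correct, and it follows the route the paper intends but leaves implicit: the corollary is stated without proof, as a consequence of the Full Normalization theorem and the Canonicity lemma, which are exactly the two ingredients you combine. The additions you make — induction on the height of the normal deduction and substitution of closed deductions for the discharged assumptions — are both sound: subdeductions of a normal deduction are normal, the contexts $\Gamma_{i}$ and $\Delta$ of the final $\Box$-introduction must be empty when the whole deduction is closed, and substituting closed deductions for assumption leaves cannot violate any side condition of the $\Box$-introduction rule, since that rule places no constraint on the shape of $\Delta$. It is worth noting, however, that the induction can be avoided altogether. If $n\geq 1$, each $\mathcal{D}_{i}$ is a closed normal deduction of $\Box A_{i}$, so canonicity forces it to end with $\Box$-introduction; but a $\Box$-introduction one of whose boxed premises is itself the conclusion of a $\Box$-introduction is precisely a redex for the first modal rewriting of Definition~\ref{riscritture}, contradicting the normality of $\mathcal{D}$. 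Hence $n=0$, and the rightmost subdeduction $\mathcal{D}'$ is already a closed (indeed normal) deduction of $A$: no recursion and no substitution are needed. Your argument buys robustness — it works even if the modal detour conversions are weakened — while the shortcut buys brevity and additionally shows that the extracted deduction of $A$ is itself normal.
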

\begin{proof}
By Lemma~\ref{canonicity}, since $\Box A$ is deducible from no assumption, its last rule in the corresponding natural deduction must be $\Box$-introduction.
\end{proof}

\begin{corollary}[Disjunction property] If $\mathsf{IEL}^{-}\vdash A\vee B$, then $\mathsf{IEL}^{-}\vdash A$ or $\mathsf{IEL}^{-}\vdash B$.
\end{corollary}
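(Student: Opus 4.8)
The plan is to derive the disjunction property as a direct consequence of the canonicity lemma, exactly as I would for ordinary $\mathsf{NJ}$. Suppose $\mathsf{IEL}^{-}\vdash A\vee B$, i.e. there is a closed deduction of $A\vee B$ from the empty set of assumptions. By full normalization I may assume this deduction is in normal form. Now I apply the Canonicity lemma: since the end-formula is $A\vee B$ and $\Gamma=\varnothing$, the last rule applied must be the introduction rule for the main connective, namely $\vee$-introduction.

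First I would make precise that there are exactly two such introduction rules. If the last rule is $\mathsf{in}_1$ (left $\vee$-introduction), then its premise is a normal closed deduction of $A$, witnessing $\mathsf{IEL}^{-}\vdash A$; if it is $\mathsf{in}_2$, the premise is a normal closed deduction of $B$, witnessing $\mathsf{IEL}^{-}\vdash B$. In either case the conclusion of the disjunction property holds, so the implication follows immediately.

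The only point requiring genuine care, and thus the main obstacle, is justifying that Canonicity applies unconditionally to a \emph{closed} normal deduction of a disjunction. Concretely, I must be sure that the normal form of the closed proof cannot end in an elimination rule: this is precisely what Proposition~\ref{neutr} rules out, since a neutral deduction from $\varnothing$ is impossible, and the absence of extra assumptions is what forces the final rule to be an introduction. I would therefore emphasize in the write-up that it is the combination of normality (full normalization theorem) with emptiness of the assumption set (Proposition~\ref{neutr}) that pins the last rule down to $\vee$-introduction; the modal constructor $\mathsf{B}$ poses no special difficulty here because it is not the introduction rule for $\vee$, and any $\Box$-formula appearing inside would only arise via $\Box$-introduction, never producing a disjunction as the principal formula of the final step.

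In short, the proof is a two-line corollary: normalize the closed deduction of $A\vee B$, invoke Canonicity to see it ends in $\vee$-introduction, and read off $\mathsf{IEL}^{-}\vdash A$ or $\mathsf{IEL}^{-}\vdash B$ according to which of $\mathsf{in}_1,\mathsf{in}_2$ was used. No separate treatment of the modality is needed, since the subformula property and canonicity have already absorbed all the modal bookkeeping.
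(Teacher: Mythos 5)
Your proof is correct and follows exactly the route the paper intends: the disjunction property is stated as a corollary of the Canonicity lemma (itself resting on Proposition~\ref{neutr} and full normalization), so a closed normal deduction of $A\vee B$ must end in $\vee$-introduction, and its premise is the desired closed deduction of $A$ or of $B$. Your additional remarks on why the modal rule causes no interference only make explicit what the paper leaves implicit.
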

\begin{proof}
We reason as for the previous corollary.
\end{proof}

\begin{corollary}[$\Box$-primality]If $\mathsf{IEL}^{-}\vdash \Box A\vee \Box B$, then $\mathsf{IEL}^{-}\vdash A$ or $\mathsf{IEL}^{-}\vdash B$.
\end{corollary}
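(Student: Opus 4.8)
The plan is to derive $\Box$-primality directly from the two immediately preceding corollaries, namely the disjunction property and the admissibility of the reflection rule; no new proof-theoretic machinery is required. The key observation is that $\mathsf{IEL}^{-}\vdash\Box A\vee\Box B$ is the derivability of a disjunction whose two disjuncts happen to be the boxed formulas $\Box A$ and $\Box B$, so that the disjunction property applies to it verbatim.

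First I would apply the disjunction property to the hypothesis $\mathsf{IEL}^{-}\vdash\Box A\vee\Box B$, obtaining that $\mathsf{IEL}^{-}\vdash\Box A$ or $\mathsf{IEL}^{-}\vdash\Box B$. Treating the two cases separately, I would then invoke the admissibility of reflection on each: from $\mathsf{IEL}^{-}\vdash\Box A$ reflection yields $\mathsf{IEL}^{-}\vdash A$, and symmetrically from $\mathsf{IEL}^{-}\vdash\Box B$ it yields $\mathsf{IEL}^{-}\vdash B$. Combining the two cases gives $\mathsf{IEL}^{-}\vdash A$ or $\mathsf{IEL}^{-}\vdash B$, which is precisely the desired conclusion.

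Since both ingredients are already established, there is no genuine obstacle to overcome here; the only point requiring (minimal) care is recognising that the disjunction property is being used with the compound disjuncts $\Box A$ and $\Box B$ rather than with atomic or purely propositional ones, but this causes no difficulty, as the disjunction property is stated for arbitrary formulas. If a self-contained argument were preferred, one could instead normalise a deduction of $\Box A\vee\Box B$ from the empty context: by canonicity its last rule must be a $\vee$-introduction, thereby exhibiting a normal deduction of one of $\Box A$ or $\Box B$, after which reflection finishes the argument exactly as above. Either route closes the proof in a few lines.
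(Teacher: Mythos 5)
Your proposal matches the paper's own argument exactly: the paper derives $\Box$-primality by applying the disjunction property to $\Box A\vee\Box B$ and then the admissibility of the reflection rule to whichever disjunct is obtained. Your additional remark that the disjunction property holds for arbitrary (including boxed) disjuncts, and your alternative canonicity-based route, are both consistent with the paper's development but add nothing that needs to change.
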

\begin{proof}
The result follows by the disjunction property and the admissibility of the reflection rule.
\end{proof}

\begin{corollary}[Modal disjunction property] If $\mathsf{IEL}^{-}\vdash\Box(A\vee B)$, then $\mathsf{IEL}^{-}\vdash\Box A$ or $\mathsf{IEL}^{-}\vdash\Box B$.
\end{corollary}
\begin{proof}
If we have a deduction of $\Box(A\vee B)$, then by the reflection rule we have a deduction of $A\vee B$. By the disjunction property we have a deduction of $A$ or a deduction of $B$. In each case, by applying $\Box$-introduction we have the desired result. 
\end{proof}

\subsection{Proof-theoretic semantics}

In \cite{brogi}, a categorical semantics for proofs in $\mathsf{IEL}^{-}$ is given. As stated before, that work was focused on the computational aspects of the formal system for intuitionistic belief, so that the strong normalization theorem was established for the reduction relation induced by detour-conversions with the modal rewritings here recalled in Definition \ref{riscritture}.

Accordingly, that categorical interpretation for our calculus was developed for \mbox{$\mathsf{IEL}^{-}$-deductions} modulo normalization of the restricted reduction relation.

The structures we were interested in are then the following:\footnote{The interested reader in filling all the defining concepts involved in the definition is referred to e.g.~\cite{maclane}.}

\begin{definition} An $\mathsf{IEL}^{-}$-category is given by a bi-Cartesian closed category $\mathcal{C}$ together with a monoidal pointed endofunctor $\mathfrak{K}$ whose point $\kappa$ is monoidal.
\end{definition}

Indeed, it is proven that these structures capture our deductions in a sound and complete way \cite[§ 3.2]{brogi}.

Considering the normalization result shown in the present paper, we see that, for proving soundness of this interpretation, nothing has to be tweaked from the categorical perspective: the permutation- and $\bot$-conversions that we have considered here are clearly captured by the universal property of colimits.\footnote{This is another evidence that permutations for $\vee$ and $\bot$ do not have computational relevance, as we could have already inferred from the previous results for the CPS translation.}
Therefore we can state the following adequacy theorem.

\begin{theorem}[Categorical soundness] Given an $\mathsf{IEL}^{-}$-category $\mathcal{C}$, any two $\mathsf{IEL}^{-}$-deductions which are equal modulo full normalization are canonically interpreted as $\mathcal{C}$-arrows which are equal. %Conversely, if the interpretation of any two $\mathsf{IEL}^{-}$-deductions is equal in all \sloppy\mbox{$\mathsf{IEL}^{-}$-categories,} then the two deductions are equal modulo full normalization.
\end{theorem}
\begin{proof}
By what we have just remarked about colimits, we can reason just as in \cite[§ 3.2]{brogi}.
\end{proof}

For completeness, the situation is more subtle. It is known that the universal property of colimits imposes stronger rewritings than the usual conversions for $\bot$ and $\vee$ \cite{lambekscott}. This is clearly a shortcoming of the general view-point of categorical semantics for derivations in natural deduction systems, and does not depend on the behaviour of the modality we are now considering: \emph{even for the very \textsf{NJ}}, standard rewritings for absurdity and disjunction are not enough for capturing coproducts. At the same time, the equations imposed by the universal property of coproducts do not look appealing from a proof-theoretic perspective, but cover cases (4) of Definition~\ref{permut} and (5) of Definition~\ref{permbot}.

%More interestingly, we can easily see that the schemes of co-reflection $A\rightarrow\Box$ and of \emph{tensorial strength} $A\wedge\Box B\rightarrow\Box(A\wedge B)$ are inter-derivable over the system

\section*{Conclusion and related works}
In the present paper we have investigated a deduction system for intuitionistic belief which satisfies many good proof-theoretic properties.

In particular, we have seen that deductions strongly normalise, and that proofs in normal forms satisfy the subformula principle. From these results, we have syntactically proven that the logic is consistent and decidable, along with some structural properties of the system -- namely,  canonicity; disjunction property; admissibility of the reflection rule; $\Box$-primality; modal disjunction property.

As a consequence, we can say to have succeeded in developing a `proof-theoretically tractable' system  for intuitionistic belief that can be easily turned into a modal $\lambda$-calculus, and that is analytic by design.

Moreover, considering normal deductions w.r.t.~both detour elimination and permutations has shown not to invalidate the categorical interpretation introduced in \cite{brogi}.

A natural extension of this investigation goes towards intuitionistic \emph{knowledge} as presented in \cite{artemov}.
As a matter of fact, having a natural deduction for belief, it is not hard to extend our system with an elimination rule corresponding to the principle of intuitionistic factivity of knowledge.

For this extended logic, a cut-free sequent calculus is introduced in \cite{ielseq}. In that perspective, it might be stimulating to develop a sequent calculus on the basis of $\mathsf{IEL}^{-}$ -- following the strategy of \cite{structural} for intuitionistic logic -- and checking the interplay between the Gentzen's formalisms for both intuitionistic belief \emph{and} knowledge.
In fact, it is clearly possible to give a labelled sequent calculus for intuitionistic epistemic states following the pioneering work of \cite{negri}, and obtaining interesting results on the structural behaviour of these modalities by using the powerful insight of formalizing the relational semantics introduced in \cite{artemov}. However, what we have surveyed in the present paper suggests that sticking to the purely syntactic formalism of Gentzen's systems is still a practicable path when dealing with this kind of logics.

The results contained in \cite{ass} -- where a natural deduction for intuitionistic belief is discussed along with algebraic and categorical semantics -- are closer to our viewpoint. Actually, the present author gave talks\footnote{Preliminary results were discussed in May 2019 -- during the Logic and Philosophy of Science seminar at the University of Florence. They were then refined during the poster session of The Proof Society Summer School at Swansea University in September 2019, where valuable feedback from the participants made the author opt for a single ruled calculus.} on the normalization and categorical interpretation for the same calculus discussed there, but soon realised that having a system with a single rule for the belief modality, though lacking a certain symmetry, gives a kind of `kernel' that can be easily extended in several directions -- i.e.~by using various sorts of elimination rules -- enlarging the perspective on the co-reflection scheme beyond the epistemic reading. As expected, having a single rule as ours, or adding, as in \cite{ass}, an elimination rule to the system for $\mathbb{IK}$ defined in \cite{depaivaeike} does not affect the categorical interpretation of the modality.

A fine grained analysis of the pros and cons of those different formal calculi for the intuitionistic epistemic states remains then among future work.

\bibliography{natded}

\end{document}